\documentclass[a4paper]{amsart}
\usepackage[utf8]{inputenc}
\usepackage{amssymb,amsmath,graphics,pstricks,tikz}
\usepackage{a4wide}
\usepackage[figuresright]{rotating}
\usepackage[english]{babel}
\usepackage{amsthm}
\usepackage{bbold,latexsym}
\usepackage{color}
\usepackage{amsfonts}
\usepackage{pstricks}
\usepackage{fourier}
\usepackage{paralist}
\usepackage{graphicx,multirow}
\usepackage{subfigure}
\usepackage{comment}
\usepackage[figuresright]{rotating}

\usepackage[pdftex,
            pdftitle={Fully discrete approximation of the semilinear stochastic wave equation on the sphere},
            pdfauthor={D. Cohen, S. Di Giovacchino, and A. Lang},
            bookmarksopen,
            colorlinks,
            linkcolor=black,
            urlcolor=black,
            citecolor=black
]{hyperref}

\newtheorem{thm}{Theorem}
\newtheorem{prop}[thm]{Proposition}
\newtheorem{cor}[thm]{Corollary}
\newtheorem{lem}[thm]{Lemma} 

\theoremstyle{remark}

\theoremstyle{definition}
\newtheorem{ass}[thm]{Assumption}

\newcommand{\R}{{\mathbb R}}
\newcommand{\N}{\mathbb{N}}
\newcommand{\HH}{\mathbb{H}}
\newcommand{\E}{\mathbb{E}}

\newcommand{\IS}{\mathbb{S}}
\newcommand*\dd{\mathop{}\!\mathrm{d}}
\newcommand{\e}{\mathrm{e}}
\newcommand{\ii}{\ensuremath{\mathrm{i}}}
\newcommand{\IT}{\mathbb{T}}

\newcommand{\gb}{\beta}

\newcommand{\vp}{\varphi}
\newcommand{\vt}{\vartheta}

\newcommand{\KL}{Karhunen--Lo\`eve }
\newcommand{\trace}{\operatorname{Tr}}
\newcommand{\floor}[1]{\lfloor #1 \rfloor}

\definecolor{darkgreen}{rgb}{0,.6,0}

\begin{document}
\title{Fully discrete approximation of the semilinear stochastic wave equation on the sphere}
\date{\today}

\author[D.~Cohen]{David Cohen}
\author[S.~Di Giovacchino]{Stefano Di Giovacchino}
\author[A.~Lang]{Annika Lang}

\address[David Cohen]{Department of Mathematical Sciences, 
              Chalmers University of Technology \& University of Gothenburg, 
              S--412 96 Gothenburg, Sweden.} 
              \email[]{david.cohen@chalmers.se}
\address[Stefano Di Giovacchino]{Department of Information Engineering and Computer Science and Mathematics, University of L’Aquila, I--67 100 L’Aquila, Italy.}
          \email[]{stefano.digiovacchino@univaq.it}
\address[Annika Lang]{Department of Mathematical Sciences, 
              Chalmers University of Technology \& University of Gothenburg, 
              S--412 96 Gothenburg, Sweden.}
              \email[]{annika.lang@chalmers.se}
            
\subjclass{60H15, 60H35, 65C30, 60G15, 60G60, 60G17, 33C55, 41A25}
\keywords{semilinear stochastic partial differential equations, \KL expansion, spherical harmonic functions, stochastic wave equation, sphere, spectral Galerkin methods, trigonometric integrator, exponential Euler method, strong convergence rates, almost sure convergence}

\begin{abstract}
    The semilinear stochastic wave equation on the sphere driven by multiplicative Gaussian noise is discretized by a stochastic trigonometric integrator in time and a spectral Galerkin approximation in space based on the spherical harmonic functions. Strong and almost sure convergence of the explicit fully discrete numerical scheme are shown. Furthermore, these rates are confirmed by numerical experiments.
\end{abstract}

\maketitle

\section{Introduction}

Today, it is well accepted that stochastic partial differential equations (SPDEs) provide essential and relevant models to describe phenomena in finance~\cite{MR2732847}, physics~\cite{PhysRevLett.56.889}, biology~\cite{Dalang2009}, or neurology~\cite{1002247}, to name but a few examples of possible applications. The numerical analysis of SPDEs on Euclidean spaces is a mature, vibrant, and active area of research since the seminal paper~\cite{MR1341554}. Relevant to the present publication, we mention the following papers on the numerical analysis of stochastic wave equations defined on Euclidean domains: \cite{MR2318291,MR2379913,MR2646102,MR3033008,MR3276429,MR3463447,MR3484400,MR3573128,MR4285774,MR4403208,MR4362830,MR4403208,MR4727112,MR4744301,MR4846352,MR4896748, MR4982673}.

Moving beyond the well-understood Euclidean setting, results on the numerical analysis of time-dependent SPDEs defined on surfaces, and especially on the unit sphere, have only recently emerged in the literature. To our knowledge, no approximations of semilinear equations have been considered so far and only \cite{MR3907363} treats multiplicative noise, while all other results are limited to the additive case. Without being exhaustive, we refer the interested reader for results on spectral approximations to: 
the works \cite{MR3404631, MR4701212, lang2025approximationlevydrivenstochasticheat}, which consider linear stochastic heat equations with an exact and Euler--Maruyama approximation in time and show different types of strong and weak convergence for Wiener and L\'evy noise;
the paper~\cite{MR4462619}, where similar results are derived for the linear stochastic wave and the stochastic Schr\"odinger equations;
the numerical approximation of linear fractional heat equations in~\cite{123456}, of a linear time-fractional diffusion equation in~\cite{MR4714764}, and of a linear stochastic Stokes equation in~\cite{LeGia2019};
and the only available convergence analysis for an equation on the sphere driven by multiplicative noise, more specifically the stochastic heat equation in~\cite{MR3907363}.

Building upon recent developments in the Euclidean setting (e.g. \cite{MR3353942,MR3484400}) and in the Riemannian one (e.g. \cite{MR4462619}), the present paper offers the first numerical analysis of a fully discrete numerical method for a semilinear SPDE on the sphere driven by multiplicative noise. In particular, we prove strong rates of convergence of a spectral approximation combined with an explicit stochastic trigonometric integrator for the semilinear stochastic wave equation with multiplicative noise on the sphere
\begin{equation*}
\partial_{tt}{u}(t)=\Delta_{\mathbb{S}^2}u(t)+f(u(t))+g(u(t))\dot{W}(t),
\end{equation*}
where $f,g$ are given globally Lipschitz functions, $\Delta_{\IS^2}$ is the spherical Laplacian and $W$ is an isotropic $Q$-Wiener process defined over the unit sphere~$\IS^2$ (precise definitions and assumptions are given in Section~\ref{sect-fram}). The main contributions of this work are Theorem~\ref{spa_thm} on the strong rate of convergence of a spectral method for the above SPDE and Theorem~\ref{tem_thm} on the strong rate of convergence of an explicit time integrator for this stochastic wave equation on the sphere. 

The outline of the paper is as follows: In Section~\ref{sect-fram}, we give the setting and present the Wiener process on the sphere. This is then used to formulate the considered stochastic wave equation on the sphere as an abstract stochastic evolution equation. Finally, well-posedness and regularity of the exact solution to this abstract evolution problem are given. In Section~\ref{sec-space}, we analyze the strong convergence of a spectral approximation by the spherical harmonics for the semilinear stochastic wave equation on the sphere. Section~\ref{sec-time} is devoted to the proof of strong convergence of the stochastic trigonometric integrator for the time discretization of the considered SPDE. Numerical experiments illustrating our theoretical findings are presented in Section~\ref{sec-num}.

\section{Framework}\label{sect-fram}
In this section, we introduce notation, the Wiener process on the sphere and 
define the considered semilinear stochastic wave equation. 
To do this, we mainly follow the work \cite{MR4462619} on the linear case. This preparation is then used to show existence and smoothness of solutions to the considered SPDE. 

\subsection{Setting and Wiener process on the sphere} 
We denote a complete filtered probability space by $(\Omega, \mathcal{F}, (\mathcal{F}_t)_{t\in[0,T]},\mathbb{P})$ satisfying the usual conditions. We consider the unit sphere $\mathbb S^{2}=\{x\in\R^3, \|x\|_{\R^3}=1\}$, where $\|\cdot\|_{\R^3}=(\langle\cdot,\cdot\rangle_{\R^3})^{1/2}$ denotes the Euclidean norm in $\R^3$. On the unit sphere $\IS^2$, we consider the geodesic metric given by $d(x,y)=\arccos(\langle x,y\rangle_{\R^3})$ for any $x,y\in\IS^2$. Furthermore, we identify Cartesian coordinates $x\in\IS^2$ with spherical coordinates $(\vartheta,\varphi)\in[0,\pi]\times[0,2\pi)$ by the transformation 
$x=(\sin(\vartheta)\cos(\varphi),\sin(\vartheta)\sin(\varphi),\cos(\vartheta))$. 

The generalization of the Laplace operator to functions defined on the sphere is called the spherical Laplacian or the Laplace--Beltrami operator. In terms of spherical coordinates, this operator 
is defined as
$$
\Delta_{\IS^2}=\sin(\vartheta)^{-1}\frac{\partial}{ \partial_\vartheta}\left( \sin(\vartheta) \frac{\partial}{ \partial_\vartheta}\right)+\sin(\vartheta)^{-2}\frac{\partial^2}{ \partial_\varphi^2},
$$
see \cite[Section~3.4.3]{MP11} for instance. 
The eigenvalues of the Laplace--Beltrami operator are for $\ell \in \N_0$
$$
\Delta_{\IS^2} Y_{\ell,m} = -\ell(\ell+1) Y_{\ell,m}
$$
with the eigenfunctions given by the spherical harmonics $\mathcal{Y}= (Y_{\ell,m})_{\ell\in\mathbb N_0, m\in\{-\ell,\ldots,\ell\}}$, where $Y_{\ell,m}\colon[0,\pi]\times[0,2\pi) \to \mathbb C$ is 
\begin{equation*}
Y_{\ell, m}(\vartheta, \varphi)
= \sqrt{\frac{2\ell + 1}{4\pi}\frac{(\ell-m)!}{(\ell+m)!}} P_{\ell, m}(\cos(\vartheta)) \e^{\ii m\varphi}
\end{equation*}
for $\ell \in \mathbb N_0$, $m = 0,\ldots, \ell$, and $(\vartheta,\varphi) \in [0,\pi] \times [0,2\pi)$, and
\begin{equation*}
Y_{\ell, m}
= (-1)^m \overline{Y_{\ell, -m}}
\end{equation*}
for $\ell \in \mathbb N$ and $m=-\ell, \ldots,-1$. Here, $P_{\ell, m}$ are the associated Legendre polynomials, see \cite[Theorem~2.13]{M98} for details. 

Let $\sigma$ denote the Lebesgue measure on the sphere. 
The subspace $L^2(\IS^2)=L^2(\IS^2;\mathbb R)$ of real-valued functions in $L^2(\IS^2;\mathbb C)$ is a Hilbert space with the inner product defined by
\[
\langle f,g \rangle_{L^2(\IS^2)}=\int_{\IS^2} \langle f(y),g(y) \rangle_{\mathbb R^3} \dd \sigma(y)
\]
for $f,g\in L^2(\IS^2)$. It is well-known that the spherical harmonics form an orthonormal basis of the space $L^2(\IS^2)$, see e.g. \cite[Chapter 7]{Kosmann-Schwarzbach2010}. 

It is now time to define the driving noise of the considered stochastic wave equation on the sphere. 
We refer to \cite{MR3404631} for details. Let $T\in(0,\infty)$ and define the time interval $\IT=[0,T]$. 
In the work \cite{MR3404631}, it is shown that an isotropic $Q$-Wiener process $(W(t), t \in \IT)$, taking values in $L^2(\IS^2)$, can be expressed by the series expansion 
\begin{align}\label{eqW}
\begin{split}
W(t,y)& = \sum_{\ell=0}^\infty \sum_{m=-\ell}^\ell a^{\ell, m}(t) Y_{\ell, m}(y)
\\
&= \sum_{\ell=0}^\infty \left(\sqrt{A_\ell} \gb_1^{\ell, 0}(t) Y_{\ell, 0}(y) 
+ \sqrt{2A_\ell} \sum_{m=1}^\ell (\gb_1^{\ell, m}(t) \text{Re} Y_{\ell, m}(y) + \gb_2^{\ell, m}(t) \text{Im} Y_{\ell, m}(y))\right)
\\
& = \sum_{\ell=0}^\infty \left( \sqrt{A_\ell} \gb_1^{\ell, 0}(t) L_{\ell, 0}(\vt) 
+ \sqrt{2A_\ell} \sum_{m=1}^\ell L_{\ell, m}(\vt) (\gb_1^{\ell, m}(t) \cos(m\vp) + \gb_2^{\ell, m}(t) \sin(m \vp))\right),
\end{split}
\end{align}
where $(\gb_i^{\ell, m}, i=1,2, \ell \in \N_0, m=0,\ldots, \ell)$ 
is a sequence of independent, real-valued Brownian motions, with 
$\gb_2^{\ell, 0} = 0$ for $\ell \in \N_0$ and $t \in \IT$. In addition, for $\ell \in \N$, $m = 1,\ldots,\ell$, and $\vt \in [0,\pi]$ we have set
  \begin{equation*}
   L_{\ell, m}(\vt)
    = \sqrt{\frac{2\ell +1}{4\pi} \frac{(\ell-m)!}{(\ell+m)!}}
	P_{\ell, m}(\cos(\vt)).
  \end{equation*}
The covariance operator $Q$ is similarly characterized as in the introduction of \cite{LLS13} by 
\begin{align*}
Q Y_{\ell, m} 
& = A_\ell Y_{\ell, m}
\end{align*}
for $\ell \in \N_0$ and $m=-\ell,\ldots,\ell$, meaning that the eigenvalues of $Q$ 
are represented by the given angular power spectrum $(A_\ell, \ell \in \N_0)$, 
and the eigenfunctions are the spherical harmonic functions. 

\subsection{The semilinear stochastic wave equation on the sphere}
We now have the necessary material in place to define the stochastic wave equation on the sphere, which we will numerically discretize in the next sections.

We consider the semilinear stochastic wave equation on the sphere $\IS^{2}$ 
driven by an isotropic $Q$-Wiener process
\begin{equation}
\label{stoc_wave}
\partial_{tt}{u}(t)=\Delta_{\mathbb{S}^2}u(t)+f(u(t))+g(u(t))\dot{W}(t),\qquad 
\text{for $t\in(0,T]$}
\end{equation}
with initial values $u(0)=u_0$ and $\partial_t{u}(0)=v_0$. The notation $\dot W$ denotes the formal derivative of the $Q$-Wiener process $W$ given by equation~\eqref{eqW}. Here, the given mappings $f,g(u)\colon L^2(\IS^2) \to L^2(\IS^2)$, for $u\in L^2(\IS^2)$, are deterministic and verify some smoothness assumptions given below. 

We start by writing the stochastic wave equation~\eqref{stoc_wave} as the system of 
stochastic evolution equations 
\begin{equation}
\label{abstract_eq}
\dd X(t)=A X(t)\dd t+F(X(t))\dd t+G(X(t))\dd W(t), \qquad X(0)=X_0,
\end{equation}
where
$$
X(t)=\begin{pmatrix}u(t)\\v(t)\end{pmatrix}=\begin{pmatrix}u(t)\\\partial_t u(t)\end{pmatrix}, \:
A=\begin{pmatrix}
0& I\\ \Delta_{\mathbb{S}^2}&0\end{pmatrix}, \: F(X(t))=\begin{pmatrix}0\\
f(u(t))\end{pmatrix}, \: G(X(t))=\begin{pmatrix}0\\ g(u(t))\end{pmatrix}, \text{ and } \: 
X_0=\begin{pmatrix}u_0\\ v_0\end{pmatrix}.
$$
To show existence and uniqueness of solutions to~\eqref{abstract_eq}, we define the Sobolev spaces $H^s=H^s(\IS^2)=\left(I-\Delta_{\IS^2}\right)^{-s/2}L^2(\IS^2)$ for any $s\in\R$, and their product spaces $\mathbb{H}^s=\mathbb{H}^s(\mathbb{S}^2) = H^s\times H^{s-1}$  
with the norms
\begin{equation}
\label{norm_def}
\left\|X\right\|^2_{{\HH}^s}=\left\|u\right\|_{s}^2+\left\|v\right\|_{{s-1}}^2
\end{equation}
for $X=(u,v)^T\in \HH^s$. Here, we have used the norms  
$$
\|u\|_s=\| \left(I-\Delta_{\IS^2}\right)^{s/2} u\|_{L^2(\IS^2)}
$$
for $u\in H^s$. For $s=0$, we set $H^0=L^2(\IS^2)$ and $\HH=\HH^0=H^0\times H^{-1}$. Note that $\HH$ is a separable Hilbert space.

Finally, we denote by $\left\|\cdot\right\|_{\mathcal{L}(\HH^s)}$ the usual operator norm in the space $\mathcal{L}(\HH^s)=\mathcal{L}(\HH^s;\HH^s)$ of bounded linear operators from $\HH^s$ to $\HH^s$. Similarly, the space $\mathcal{L}_2(H^{s})$ denotes the space of the Hilbert--Schmidt operator from $L^2(\mathbb{S}^2)$ to $H^{s}(\mathbb{S}^2)$. 

The mild solution to the stochastic evolution equation~\eqref{abstract_eq}, as for instance defined in ~\cite{MR3236753}, reads 
\begin{equation}
\label{mild_form_comp}
X(t)=E(t)X_0+\int_{0}^{t}E(t-s)F(X(s))\dd s+\int_{0}^{t}E(t-s) G(X(s))\dd W(s)
\end{equation}
for all $t\in \mathbb{T}$. Here, the $C_0$-group $E(t)=e^{tA}$ is given by 
$$
E(t)=
\begin{pmatrix}C(t) & \left(-\Delta_{\mathbb{S}^2}\right)^{-\frac{1}{2}}S(t)\\
-\left(-\Delta_{\mathbb{S}^2}\right)^{\frac{1}{2}}S(t) & C(t)
\end{pmatrix},
$$ 
where we set, for $t \in \R$, $C(t)=\cos\left(t\left(-\Delta_{\mathbb{S}^2}\right)^{\frac{1}{2}}\right)$ and 
$S(t)=\sin\left(t\left(-\Delta_{\mathbb{S}^2}\right)^{\frac{1}{2}}\right)$ for the cosine and sine operators. 

We now collect some results on the above cosine and sine operators on the sphere. 
These results are very similar to the corresponding results from the Euclidean setting given in \cite{MR2646102,MR3033008,MR3353942} for instance. However, since one eigenvalue of the Laplace--Beltrami operator is zero, the proofs of these results on the sphere need to be adapted. First, using the expansions of the cosine and sine operators in spherical harmonics together with the trigonometric identity $\cos(t)^2+\sin(t)^2=1$, one can get the following estimates.
\begin{lem}
\label{lem_op}
For any $s,t\in\mathbb R$, $u\in H^s(\mathbb{S}^2)$ and $v \in H^{s-1}(\mathbb{S}^2)$, the elements of the group matrix $E(t)$ are bounded by
$$
\begin{aligned}
\left\|C(t)u \right\|_{s} & \le \left\|u\right\|_{s}, \quad 
\left\|\left(-\Delta_{\mathbb{S}^2}\right)^{-\frac{1}{2}}S(t) v \right\|_{s} \le 
\max\left(|t|,\sqrt{\frac32}\right)\left\|v\right\|_{{s-1}}, \quad
\left\|\left(-\Delta_{\mathbb{S}^2}\right)^{\frac{1}{2}} S(t)  u \right\|_{{s-1}} \le \left\|u\right\|_{{s}}.
\end{aligned}
$$
These bounds yield the group estimate 
\begin{equation*} 
\left\|E(t)\right\|_{\mathcal{L}(\HH^s)}\leq 2\max\left(|t|,\sqrt{\frac32}\right).
\end{equation*}
\end{lem}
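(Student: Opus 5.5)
The plan is to work entirely in the spherical harmonic basis, where all operators involved are diagonal. I write $u=\sum_{\ell,m}u_{\ell,m}Y_{\ell,m}$ and $v=\sum_{\ell,m}v_{\ell,m}Y_{\ell,m}$, and set $\lambda_\ell=\sqrt{\ell(\ell+1)}$. Then
$$\|u\|_s^2=\sum_{\ell,m}(1+\lambda_\ell^2)^s|u_{\ell,m}|^2,$$
and $C(t)$, $(-\Delta_{\IS^2})^{-1/2}S(t)$ and $(-\Delta_{\IS^2})^{1/2}S(t)$ act on $Y_{\ell,m}$ by the scalars $\cos(t\lambda_\ell)$, $\sin(t\lambda_\ell)/\lambda_\ell$ and $\lambda_\ell\sin(t\lambda_\ell)$, respectively. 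The delicate point is the zero eigenvalue $\ell=0$. There I interpret $(-\Delta_{\IS^2})^{-1/2}S(t)$ through its continuous extension $\sin(t\lambda)/\lambda\to t$, which is exactly what makes $E(t)=e^{tA}$ a group: on the constants, $A$ acts as the nilpotent matrix $\begin{pmatrix}0&1\\0&0\end{pmatrix}$. With this convention, each estimate reduces to a bound on the ratio of the weighted squared multiplier to the weight of the norm on the right-hand side, taken uniformly in $\ell$.

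For $C(t)$ the multiplier satisfies $|\cos(t\lambda_\ell)|\le 1$, which gives the first bound immediately. For the third bound, in mode $\ell$ I need
$$\lambda_\ell^2\sin^2(t\lambda_\ell)(1+\lambda_\ell^2)^{s-1}\le \frac{\lambda_\ell^2}{1+\lambda_\ell^2}(1+\lambda_\ell^2)^s\le (1+\lambda_\ell^2)^s,$$
and for $\ell=0$ the multiplier vanishes. For the middle bound, the ratio to control is $\sin^2(t\lambda_\ell)(1+\lambda_\ell^2)/\lambda_\ell^2$ when $\ell\ge1$. Since $\lambda_\ell^2\ge 2$, this ratio is at most $1+1/\lambda_\ell^2\le 3/2$. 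For $\ell=0$ the ratio is $t^2\cdot 1^s/1^{s-1}=t^2$. Summing over modes then gives the constant $\max(t^2,3/2)^{1/2}=\max(|t|,\sqrt{3/2})$.

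For the group estimate, I set $M=\max(|t|,\sqrt{3/2})\ge1$ and take $X=(u,v)^T\in\HH^s$. By \eqref{norm_def} and the triangle inequality, together with the three bounds,
$$\|E(t)X\|_{\HH^s}^2\le(\|u\|_s+M\|v\|_{s-1})^2+(\|u\|_s+\|v\|_{s-1})^2\le 2M^2(\|u\|_s+\|v\|_{s-1})^2\le 4M^2\|X\|_{\HH^s}^2,$$
where the last step uses $(a+b)^2\le 2(a^2+b^2)$. This yields $\|E(t)\|_{\mathcal L(\HH^s)}\le 2M$.

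The only genuine obstacle is the correct treatment of the $\ell=0$ mode. It must be made explicit that the operator $(-\Delta_{\IS^2})^{-1/2}S(t)$ is defined on constants by $t$, and this mode is exactly the source of the linear growth in $|t|$. Everything else is a diagonal computation.
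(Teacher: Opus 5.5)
Your proposal is correct and follows essentially the same route as the paper: a diagonal computation in the spherical harmonic basis, with the $\ell=0$ mode of $(-\Delta_{\IS^2})^{-1/2}S(t)$ interpreted as multiplication by $t$ (the paper writes this via $t\,\mathrm{sinc}$), and the bound $(1+\ell(\ell+1))/(\ell(\ell+1))\le 3/2$ for $\ell\ge 1$. You also write out the third estimate and the combination giving $\|E(t)\|_{\mathcal{L}(\HH^s)}\le 2\max(|t|,\sqrt{3/2})$, both of which the paper omits, and both are correct.
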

\begin{proof}
Let us denote by $\{u^{\ell,m}\}_{\ell=0,m=-\ell}^{\infty,\ell}$ the coefficients of $u$ in the basis given by the spherical harmonics. By definition of the norms and orthonormality of the spherical harmonics, we have the estimates
$$
\begin{aligned}
 \left\|C(t)u \right\|^2_{s} &=  \left\|\left(I-\Delta_{\mathbb{S}^2}\right)^{\frac{s}{2}}\cos(t(-\Delta_{\mathbb{S}^2})^{\frac{1}{2}})u \right\|^2_{0}
 =\sum_{\ell=0}^{\infty}\sum_{m=-\ell}^{\ell}(1+\ell(\ell+1))^s \cos^2(t (\ell(\ell+1))^{\frac{1}{2}})|u^{\ell,m}|^2\\
 & \le \sum_{\ell=0}^{\infty}\sum_{m=-\ell}^{\ell}(1+\ell(\ell+1))^s |u^{\ell,m}|^2 
 = \left\|u\right\|^2_{s}.
\end{aligned}
$$
This shows the first estimate in the lemma. 

Let us now denote by $\{v^{\ell,m}\}_{\ell=0,m=-\ell}^{\infty,\ell}$ 
the coefficients of $v$ in the basis given by the spherical harmonics. As above, by definition of the norms, we obtain analogously
$$
\begin{aligned}
\left\|\left(-\Delta_{\mathbb{S}^2}\right)^{-\frac{1}{2}}S(t) v \right\|^2_{s} 
&=\left\|(I-\Delta_{\mathbb{S}^2})^{\frac{s}{2}} (-\Delta_{\mathbb{S}^2})^{-\frac{1}{2}}\sin(t(-\Delta_{\mathbb{S}^2})^{\frac12}) v\right\|^2_{0}=\left\|(I-\Delta_{\mathbb{S}^2})^{\frac{s}{2}} t\text{sinc}(t(-\Delta_{\mathbb{S}^2})^{\frac12}) v\right\|^2_{0}\\
&=t^2|v^{0,0}|^2+\sum_{\ell=1}^{\infty}\sum_{m=-\ell}^{\ell}(1+\ell(\ell+1))^s 
(\ell(\ell+1))^{-1}\sin^2(t(\ell(\ell+1))^{\frac{1}{2}})|v^{\ell,m}|^2\\
&\leq t^2|v^{0,0}|^2+\frac32\sum_{\ell=1}^{\infty}\sum_{m=-\ell}^{\ell}(1+\ell(\ell+1))^{s-1}|v^{\ell,m}|^2\\
& \le \max\left(t^2,\frac32\right) \sum_{\ell=0}^{\infty}\sum_{m=-\ell}^{\ell}(1+\ell(\ell+1))^{s-1} |v^{\ell,m}|^2
= \max\left(t^2,\frac32\right) \left\|v\right\|^2_{s-1}.
\end{aligned}
$$
This proves the second estimate of the lemma. The proof of the last estimate of the lemma is done similarly and is thus omitted. Combining the above bounds for the cosine and sine operators yields the bound for $E$ and concludes the proof.
\end{proof}

We collect further estimates on the cosine and sine operators in the next lemma.
\begin{lem}
\label{lem_op2}
For any $s\in\mathbb{R}$, $q \in [0,1]$, $t \in \IT$, $u \in H^{s+q}(\IS^2)$, and $v \in H^{s-1+q}(\IS^2)$, the sine and cosine operators satisfy the bounds
\begin{gather*}
    \| (C(t) - I)u \|_s
         \le 2 t^q \|u\|_{s+q}, \quad
    \| (-\Delta_{\mathbb{S}^2})^{-\frac{1}{2}} S(t)v \|_s
        \le \max\left(T, \sqrt{\tfrac{3}{2}}\right) t^q \|v\|_{s-1+q}, \\
    \left\|(-\Delta_{\mathbb{S}^2})^{\frac{1}{2}}S(t) u\right\|_{s-1}
         \le t^q \left\|u\right\|_{s+q}.
\end{gather*}
These estimates bound the group~$E$ by
\begin{equation*} 
    \| E(t) - I \|_{\mathcal{L}(\HH^{s+q};\HH^s)}
        \le \max (4, 2T) t^q .
\end{equation*}
\end{lem}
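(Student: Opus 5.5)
Following the proof of Lemma~\ref{lem_op}, I will expand $u$ and $v$ in spherical harmonics, with coefficients $u^{\ell,m}$ and $v^{\ell,m}$. This reduces each operator bound to a scalar inequality on each eigenvalue $\lambda_\ell=\ell(\ell+1)$ of $-\Delta_{\IS^2}$. Writing $\mu_\ell = 1+\lambda_\ell$, it then suffices to check for every $\ell$ that the squared scalar multiplier, multiplied by $\mu_\ell^{s}$, is bounded by the claimed constant squared times $t^{2q}\mu_\ell^{s+q}$ (or $\mu_\ell^{s-1+q}$ for the $v$ component). So in each case I must prove an inequality of the form $|\text{multiplier}_\ell| \le c\, t^q \mu_\ell^{q/2}$, up to the normalising power. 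Throughout, $t\ge 0$ since $t\in\IT$.

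\textbf{First estimate.} For $\ell=0$ the multiplier $\cos(0)-1$ vanishes. For $\ell\ge1$ I will use the two elementary bounds $|\cos x-1|\le 2$ and $|\cos x -1|\le |x|$ with $x=t\lambda_\ell^{1/2}$. Interpolating, $|\cos x-1| = |\cos x-1|^{1-q}|\cos x-1|^q \le 2^{1-q}|x|^q \le 2 t^q\lambda_\ell^{q/2}\le 2t^q\mu_\ell^{q/2}$.

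\textbf{Third estimate.} This is analogous. The $\ell=0$ term vanishes. For $\ell\ge1$ we have $\lambda_\ell^{1/2}|\sin x|\le \lambda_\ell^{1/2}|x|^q = t^q\lambda_\ell^{(1+q)/2}$, using $|\sin x|\le\min(1,|x|)\le |x|^q$. Since $\lambda_\ell\le\mu_\ell$, this yields $\mu_\ell^{(s-1)/2}\lambda_\ell^{1/2}|\sin x|\le t^q\mu_\ell^{(s+q)/2}$.

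\textbf{Second estimate.} This is the only place where the zero eigenvalue needs care, and it is the step I expect to be the main obstacle. For $\ell=0$ the multiplier is $t$ (the sinc interpretation). Here I write $t=t^q t^{1-q}\le t^q T^{1-q}\le \max(T,1)\,t^q$, and since $\mu_0=1$ this is within the constant $\max(T,\sqrt{3/2})$. For $\ell\ge1$ the multiplier is $\lambda_\ell^{-1/2}|\sin x|\le \lambda_\ell^{-1/2}t^q\lambda_\ell^{q/2}=t^q\lambda_\ell^{(q-1)/2}$. To convert this to $\mu_\ell$, I use $\mu_\ell\le\frac32\lambda_\ell$ for $\ell\ge1$, which gives $\lambda_\ell^{-(1-q)}\le (\tfrac32)^{1-q}\mu_\ell^{-(1-q)}\le\tfrac32\mu_\ell^{q-1}$. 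The squared multiplier times $\mu_\ell^s$ is therefore at most $\tfrac32 t^{2q}\mu_\ell^{s-1+q}$. Taking the maximum over $\ell$ gives the constant $\max(T,\sqrt{3/2})$.

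\textbf{Group estimate.} Finally, I will write
\[
E(t)-I=\begin{pmatrix}C(t)-I & (-\Delta_{\IS^2})^{-\frac12}S(t)\\ -(-\Delta_{\IS^2})^{\frac12}S(t) & C(t)-I\end{pmatrix}
\]
and apply the estimates entrywise, with the $(2,2)$ entry handled by the first estimate at index $s-1$. For $(u,v)\in\HH^{s+q}$ this gives
\[
\|(E(t)-I)(u,v)\|_{\HH^s}\le \|(C(t)-I)u\|_s+\|(-\Delta_{\IS^2})^{-\frac12}S(t)v\|_s+\|(-\Delta_{\IS^2})^{\frac12}S(t)u\|_{s-1}+\|(C(t)-I)v\|_{s-1}.
\]
This is bounded by $(2+\max(T,\sqrt{3/2})+1+2)\,t^q\,\|(u,v)\|_{\HH^{s+q}}$, using $\|u\|_{s+q},\|v\|_{s-1+q}\le\|(u,v)\|_{\HH^{s+q}}$.

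Summing this way gives a constant of size $5+\max(T,\sqrt{3/2})$, which is too crude. To reach $\max(4,2T)$ I will instead bound each component norm by the triangle inequality and then use $\sqrt{a^2+b^2}$-type combinations. The first component is bounded by $2t^q\|u\|_{s+q}+\max(T,\sqrt{3/2})t^q\|v\|_{s-1+q}$, and the second by $t^q\|u\|_{s+q}+2t^q\|v\|_{s-1+q}$. Cauchy--Schwarz then gives a constant bounded by $2\max(2,T)+\ldots$ The exact bookkeeping, either in squared form or with the bound $\max(2,\max(T,\sqrt{3/2}))\cdot 2$, will be arranged to match $\max(4,2T)$, in the same way Lemma~\ref{lem_op} combined its bounds into $2\max(|t|,\sqrt{3/2})$.
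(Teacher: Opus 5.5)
Your proposal is correct and follows essentially the paper's route: a mode-by-mode spherical harmonic expansion using $|\cos x-1|\le\min(2,|x|)$, $|\sin x|\le\min(1,|x|)$ and $\mu_\ell\le\frac32\lambda_\ell$ for $\ell\ge1$. The only difference is that you interpolate directly on the scalar multipliers, whereas the paper proves the $q=0$ and $q=1$ endpoint bounds and then invokes interpolation; for these Fourier multipliers the two are equivalent, and yours is slightly more self-contained. For the group estimate, which the paper only asserts, your second option closes it at once: every entry of $E(t)-I$ is bounded by $m\,t^q$ with $m=\max(2,T,\sqrt{3/2})=\max(2,T)$, so both components are at most $m\,t^q(a+b)$ with $a=\|u\|_{s+q}$, $b=\|v\|_{s-1+q}$, and $(a+b)^2\le 2(a^2+b^2)$ gives $\|(E(t)-I)X\|_{\HH^s}\le 2m\,t^q\|X\|_{\HH^{s+q}}=\max(4,2T)\,t^q\|X\|_{\HH^{s+q}}$.
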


\begin{proof}
    With the same expansion as in the proof of Lemma~\ref{lem_op}, we obtain
    \begin{align*}
        \| (C(t) - I)u \|_s^2
            = \sum_{\ell=0}^\infty \sum_{m=-\ell}^\ell (1+\ell(\ell+1))^{s} (\cos(t(\ell(\ell+1))^{1/2}) - 1)^2 |u^{\ell,m}|^2.
    \end{align*}
    We observe that the summand for $\ell=0$ is zero and that for $\ell \neq 0$ on the one hand 
    \begin{equation*}
        (\cos(t(\ell(\ell+1))^{1/2}) - 1)^2 \le 4
    \end{equation*}
    and on the other hand 
    \begin{equation*}
        (\cos(t(\ell(\ell+1))^{1/2}) - 1)^2
            = \left( \int_0^{t(\ell(\ell+1))^{1/2}} \sin(r) \, \dd r \right)^2
            \le t^2 \ell(\ell+1)
            \le t^2 (1+\ell(\ell+1)),
    \end{equation*}
    since $|\sin(\cdot)|$ is bounded by~$1$. Plugging these bounds into the series expansion therefore yields
    \begin{equation*}
        \| (C(t) - I)u \|_s^2
        \le
        \begin{cases}
            4 \|u\|_s^2 & \text{for } u \in H^s(\IS^2),\\
            t^2 \|u\|_{s+1}^2 & \text{for } u \in H^{s+1}(\IS^2).
        \end{cases}
    \end{equation*}
    The claim follows by interpolation.

    To show the second claim, we use the bound from Lemma~\ref{lem_op} and observe further, with the boundedness of the cosine function, that
    \begin{equation*}
        \sin^2(t(\ell(\ell+1))^{1/2})
            = \left( \int_0^{t(\ell(\ell+1))^{1/2}} \cos(r) \, \dd r\right)^2
            \le t^2 \ell(\ell+1),
    \end{equation*}
    which plugged into the series expansion in the proof of Lemma~\ref{lem_op} yields
    \begin{equation*}
        \| (-\Delta_{\mathbb{S}^2})^{-\frac{1}{2}} S(t)v \|_s^2
        \le
        \begin{cases}
            \max\left(t^2,\tfrac{3}{2}\right) \|v\|_{s-1}^2 & \text{for } v \in H^{s-1}(\IS^2),\\
            t^2 \|v\|_{s}^2 & \text{for } v \in H^{s}(\IS^2).
        \end{cases}
    \end{equation*}
    The claim follows by interpolation and by bounding $t$ by~$T$.
    The last claim follows analogously, and its proof is therefore omitted.
\end{proof}

In order to treat the nonlinearities in the stochastic wave equation on the sphere~\eqref{stoc_wave} or equivalently the abstract SPDE~\eqref{abstract_eq}, we make the following assumptions
\begin{ass}
\label{ass_compact}
For any $u,u_1,u_2 \in L^2(\mathbb{S}^2)$, the nonlinear mappings 
$f$ and $g$ satisfy
$$
\begin{aligned}
&\left\|f(u_1)-f(u_2)\right\|_{L^2(\mathbb{S}^2)} +\left\|\left(g(u_1)-g(u_2)\right)Q^{\frac{1}{2}}\right\|_{\mathcal{L}_2(H^{\delta-1})} \le L_1 \left\|u_1-u_2\right\|_{L^2(\mathbb{S}^2)},\\
&\left\|f(u)\right\|_{L^2(\mathbb{S}^2)}+\left\|g(u)Q^{\frac{1}{2}}\right\|_{\mathcal{L}_2(H^{\delta-1})}\le L_2 \left(1+\left\|u\right\|_{L^2(\mathbb{S}^2)}\right),
\end{aligned}
$$
for some constants $\delta, L_1, L_2\in(0,\infty)$. 
\end{ass}

The finiteness of $\|g(u)Q^{1/2}\|_{\mathcal{L}_2(H^{\delta-1})}$ gives conditions on the smoothness of the noise. For additive noise, i.\,e., $g = I$, the condition simplifies to $\|Q^{1/2}\|_{\mathcal{L}_2(H^{\delta-1})} = \|(I- \Delta_{\IS^2})^{(\delta-1)/2} Q^{1/2}\|_{\mathcal{L}_2(H^0)} = \trace ((I-\Delta_{\IS^2})^{(\delta-1)} Q) < + \infty$. For $\delta = 1$, we are thus in the setting of trace class noise.

With this at hand, we can prove the following lemma.
\begin{lem}
\label{lip_comp}
Assume that the stochastic wave equation~\eqref{stoc_wave} satisfies Assumption~\ref{ass_compact}. Then, there exist positive constants $\tilde{L}_1$ and $\tilde{L}_2$, such that, for any $X=(u, v)^T$, $X_1=(u_1, v_1)^T$, and $X_2=(u_2, v_2)^T$, it holds 
$$
\begin{aligned}
&\left\|F(X_1)-F(X_2)\right\|_\HH + \left\|\left(G(X_1)-G(X_2)\right)Q^{\frac{1}{2}}\right\|_{\mathcal{L}_2(\HH)}\le \tilde{L}_1\left\|X_1-X_2\right\|_\HH,\\
&\left\|F(X)\right\|_{\HH} + \left\|G(X)Q^{\frac{1}{2}}\right\|_{\mathcal{L}_2(\HH)}\le \tilde{L}_2\left(1+\left\|X\right\|_\HH\right),
\end{aligned}
$$
where $\mathcal{L}_2(\HH) = \mathcal{L}_2(L^2(\IS^2);\HH)$. 
\end{lem}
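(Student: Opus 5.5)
The plan is to reduce both estimates of Lemma~\ref{lip_comp} to Assumption~\ref{ass_compact} by computing the $\HH$-norms of the block vectors $F(X)$ and $G(X)Q^{1/2}$ directly. Since $F$ and $G$ have zero first component and $\HH = H^0\times H^{-1}$, the norm definition~\eqref{norm_def} gives
\begin{equation*}
\|F(X_1)-F(X_2)\|_\HH = \|f(u_1)-f(u_2)\|_{-1} \le \|f(u_1)-f(u_2)\|_{L^2(\IS^2)}.
\end{equation*}
The inequality uses that $\|w\|_{-1}\le \|w\|_0$. This follows from the spectral expansion, because $(1+\ell(\ell+1))^{-1}\le 1$. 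The same reasoning gives $\|F(X)\|_\HH\le \|f(u)\|_{L^2(\IS^2)}$.

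For the noise term, take an orthonormal basis $(e_k)$ of $L^2(\IS^2)$, for instance the real spherical harmonics. Then
\begin{equation*}
\|(G(X_1)-G(X_2))Q^{1/2}\|_{\mathcal{L}_2(\HH)}^2 = \sum_k \|(g(u_1)-g(u_2))Q^{1/2}e_k\|_{-1}^2 = \|(g(u_1)-g(u_2))Q^{1/2}\|_{\mathcal{L}_2(H^{-1})}^2.
\end{equation*}
Since $\delta>0$, we have $\|w\|_{-1}\le \|w\|_{\delta-1}$ termwise. Hence this quantity is bounded by $\|(g(u_1)-g(u_2))Q^{1/2}\|_{\mathcal{L}_2(H^{\delta-1})}^2$, and likewise $\|G(X)Q^{1/2}\|_{\mathcal{L}_2(\HH)} \le \|g(u)Q^{1/2}\|_{\mathcal{L}_2(H^{\delta-1})}$.

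Adding these bounds and applying Assumption~\ref{ass_compact} yields
\begin{equation*}
\|F(X_1)-F(X_2)\|_\HH + \|(G(X_1)-G(X_2))Q^{1/2}\|_{\mathcal{L}_2(\HH)} \le L_1\|u_1-u_2\|_{L^2(\IS^2)}.
\end{equation*}
Because $\|u_1-u_2\|_{L^2(\IS^2)} = \|u_1-u_2\|_0 \le \|X_1-X_2\|_\HH$, we may take $\tilde L_1 = L_1$. In the same way, $1+\|u\|_{L^2(\IS^2)}\le 1+\|X\|_\HH$ gives $\tilde L_2 = L_2$.

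There is no substantial obstacle. The only point to handle carefully is the Hilbert--Schmidt norm of a block operator whose range has zero first component: one must check that its $\mathcal{L}_2(L^2;\HH)$-norm equals the $\mathcal{L}_2(L^2;H^{-1})$-norm of the second block. One must also confirm the monotonicity $\|\cdot\|_{s}\le\|\cdot\|_{r}$ for $s\le r$, which follows from the spherical harmonic expansion.
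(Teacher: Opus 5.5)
Your proof is correct and follows the paper's argument. Both arguments compute the $\HH$-norm and the $\mathcal{L}_2(L^2(\IS^2);\HH)$-norm as the $H^{-1}$-norms of the second components, bound those by the $H^0$- and $H^{\delta-1}$-norms via $(I-\Delta_{\IS^2})^{-1/2}$ and $(I-\Delta_{\IS^2})^{-\delta/2}$, and then apply Assumption~\ref{ass_compact} together with $\|u\|_0\le\|X\|_\HH$. The only difference is in the constants. The paper bounds the two terms separately and keeps these operator norms, obtaining $\tilde L_1=L_1\bigl(\|(I-\Delta_{\IS^2})^{-1/2}\|_{\mathcal{L}(H^0)}+\|(I-\Delta_{\IS^2})^{-\delta/2}\|_{\mathcal{L}(H^0)}\bigr)$. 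You note that both norms are at most $1$ and use the joint bound in the assumption, which gives the slightly sharper $\tilde L_i=L_i$.
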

\begin{proof}
By definitions of the mappings $F$ and $G$, and of the norm in the product space $\HH$, we get
$$
\begin{aligned}
& \left\|F(X_1)-F(X_2)\right\|_\HH + \left\|\left(G(X_1)-G(X_2)\right)Q^{\frac{1}{2}}\right\|_{\mathcal{L}_2(\HH)}\\
& \quad =  \left\|f(u_1)-f(u_2)\right\|_{{-1}}+\left\|\left(g(u_1)-g(u_2)\right)Q^{\frac{1}{2}}\right\|_{\mathcal{L}_2(H^{-1})}
\\
& \quad \le \left\|\left(I-\Delta_{\mathbb{S}^2}\right)^{-\frac{1}{2}}\right\|_{\mathcal{L}(H^0)}
\left\|f(u_1)-f(u_2)\right\|_{L^2(\mathbb{S}^2)}+
 \left\|\left(I-\Delta_{\mathbb{S}^2}\right)^{-\frac{\delta}{2}}\right\|_{\mathcal{L}(H^0)}\left\|\left(g(u_1)-g(u_2)\right)Q^{\frac{1}{2}}\right\|_{\mathcal{L}_2(H^{\delta-1})}.
\end{aligned}
$$
Using Assumption~\ref{ass_compact}, one then obtains the estimates
$$
\begin{aligned}
& \left\|F(X_1)-F(X_2)\right\|_\HH + \left\|\left(G(X_1)-G(X_2)\right)Q^{\frac{1}{2}}\right\|_{\mathcal{L}_2(\HH)}\\
 & \quad \le L_1\left(\left\|\left(I-\Delta_{\mathbb{S}^2}\right)^{-\frac{1}{2}}\right\|_{\mathcal{L}(H^0)}
+\left\|\left(I-\Delta_{\mathbb{S}^2}\right)^{-\frac{\delta}{2}}\right\|_{\mathcal{L}(H^0)}\right)
\left\|u_1-u_2\right\|_{L^2(\IS^2)}
\le \tilde{L}_1\left\|X_1-X_2\right\|_\HH
\end{aligned}
$$
with $\tilde{L}_1=L_1(\|(I-\Delta_{\mathbb{S}^2})^{-\frac{1}{2}}\|_{\mathcal{L}(H^0)}+\|(I-\Delta_{\mathbb{S}^2})^{-\frac{\delta}{2}}\|_{\mathcal{L}(H^0)})$. This shows the first result of the lemma.

Similarly, one shows the second part of the statement as follows
$$
\begin{aligned}
& \left\|F(X)\right\|_\HH + \left\|G(X)Q^{\frac{1}{2}}\right\|_{\mathcal{L}_2(\HH)}\\
&  \quad =  \left\|f(u)\right\|_{-1}+\left\|g(u)Q^{\frac{1}{2}}\right\|_{\mathcal{L}_2(H^{-1})}
\\
& \quad \le \left\|\left(I-\Delta_{\mathbb{S}^2}\right)^{-\frac{1}{2}}\right\|_{\mathcal{L}(H^0)}
\left\|f(u)\right\|_{L^2(\mathbb{S}^2)}+
 \left\|\left(I-\Delta_{\mathbb{S}^2}\right)^{-\frac{\delta}{2}}\right\|_{\mathcal{L}(H^0)}\left\|g(u)Q^{\frac{1}{2}}\right\|_{\mathcal{L}_2(H^{\delta-1})}\\
 & \quad \le L_2\left(\left\|\left(I-\Delta_{\mathbb{S}^2}\right)^{-\frac{1}{2}}\right\|_{\mathcal{L}(H^0)}
+\left\|\left(I-\Delta_{\mathbb{S}^2}\right)^{-\frac{\delta}{2}}\right\|_{\mathcal{L}(H^0)}\right)
\left(1+\left\|u\right\|_{L^2(\IS^2)}\right)
\le \tilde{L}_2\left(1+\left\|X\right\|_\HH\right),
\end{aligned}
$$
which concludes the proof.
\end{proof}

\subsection{Well-posedness and regularity for the exact solution}
After the above preparations, we can establish the following well-posedness result 
for the semilinear stochastic wave equation on the sphere~\eqref{abstract_eq}. 
\begin{thm}
\label{wp_thm}
Given Assumption~\ref{ass_compact}, suppose further that $X_0\in L^{2p}\left(\Omega; \HH\right)$ for some $p\ge 1$. Then, for any $T\in(0,\infty)$, there exists, up to a modification, a unique continuous mild solution~\eqref{mild_form_comp} to the semilinear stochastic wave equation on the sphere~\eqref{abstract_eq} defined on $[0,T]$. Moreover, the mild solution satisfies 
\begin{equation}
\label{bo_00}
\sup_{t\in\IT}\mathbb{E}\bigg[\left\|X(t)\right\|^{2p}_{\HH}\bigg]\le c\left(\left\|X_0\right\|_{L^{2p}(\Omega;\HH)}^{2p}+1\right).
\end{equation}
In addition, if $X_0\in L^{2p}\left(\Omega; {\HH}^\beta(\mathbb{S}^2)\right)$ for some $p\ge 1$ and for some $\beta\ge 0$, then, for $T\in(0,\infty)$, the mild solution~\eqref{mild_form_comp} satisfies
\begin{equation}
\label{bo_01}
\mathbb{E}\bigg[\sup_{t\in\IT}\left\|X(t)\right\|^{2p}_{{\HH}^\gamma}\bigg]\le c\left(\left\|X_0\right\|_{L^{2p}(\Omega;{\HH}^\beta)}^{2p}+1\right),
\end{equation}
with $\gamma \in [0,\min(\beta,\delta,1)]$.
\end{thm}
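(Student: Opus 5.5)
The first part of Theorem~\ref{wp_thm} follows from the standard theory for semilinear SPDEs with globally Lipschitz coefficients, e.g.\ \cite{MR3236753}. The operator $A$ generates the $C_0$-group $E(t)$ on $\HH$, and by Lemma~\ref{lem_op} $\sup_{t\in\IT}\|E(t)\|_{\mathcal{L}(\HH)}\le 2\max(T,\sqrt{3/2})$. By Lemma~\ref{lip_comp}, $F$ is globally Lipschitz with linear growth as a map $\HH\to\HH$. Likewise, $G(\cdot)Q^{1/2}$ is globally Lipschitz with linear growth as a map $\HH\to\mathcal{L}_2(\HH)$.

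I would set up a Picard iteration, i.e.\ a Banach fixed point argument, on the space of predictable processes with $\sup_{t\in\IT}\E[\|X(t)\|_\HH^{2p}]<\infty$, equipped with an exponentially weighted norm. The key tools are:
\begin{itemize}
\item Hölder's inequality for the deterministic convolution;
\item the Burkholder--Davis--Gundy inequality for the stochastic convolution $\int_0^t E(t-s)G(X(s))\dd W(s)$;
\item the uniform bound on $E$ from Lemma~\ref{lem_op}.
\end{itemize}
These show that the map is a contraction, which yields existence and uniqueness. Since $E$ is a group, $E(t-s)=E(t)E(-s)$, so the stochastic convolution equals $E(t)\int_0^t E(-s)G\,\dd W$. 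This factorization gives a continuous modification directly. The moment bound \eqref{bo_00} then follows by taking $2p$-th moments in \eqref{mild_form_comp}, applying the same three estimates, and closing with Gronwall's lemma for $t\mapsto \E[\|X(t)\|_\HH^{2p}]$.

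For \eqref{bo_01}, I would estimate the three terms of \eqref{mild_form_comp} in $\HH^\gamma$, with $\gamma\le\min(\beta,\delta,1)$.

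\emph{Initial value.} Lemma~\ref{lem_op} in $\HH^\gamma$ gives $\|E(t)X_0\|_{\HH^\gamma}\le c\|X_0\|_{\HH^\gamma}\le c\|X_0\|_{\HH^\beta}$, uniformly in $t$.

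\emph{Drift term.} Since $\|F(X)\|_{\HH^\gamma}=\|f(u)\|_{\gamma-1}\le \|f(u)\|_{0}$ for $\gamma\le1$, Assumption~\ref{ass_compact} gives the bound $L_2(1+\|X\|_\HH)$. The supremum over $t$ is then handled by Hölder's inequality together with \eqref{bo_00}.

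\emph{Stochastic term.} For $\gamma\le\delta$ we have $\|G(X)Q^{1/2}\|_{\mathcal{L}_2(\HH^\gamma)}=\|g(u)Q^{1/2}\|_{\mathcal{L}_2(H^{\gamma-1})}\le\|g(u)Q^{1/2}\|_{\mathcal{L}_2(H^{\delta-1})}\le L_2(1+\|u\|_0)$.

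The main obstacle is the supremum inside the expectation for the stochastic convolution, since BDG does not apply directly to a convolution. I would again exploit the group property. Write $\int_0^t E(t-s)G\,\dd W=E(t)M(t)$ with the martingale $M(t)=\int_0^tE(-s)G(X(s))\dd W(s)$. Then $\sup_t\|E(t)M(t)\|_{\HH^\gamma}\le c\sup_t\|M(t)\|_{\HH^\gamma}$. BDG applied to $M$, with $\|E(-s)\|_{\mathcal{L}(\HH^\gamma)}\le c$ from Lemma~\ref{lem_op} (which holds for negative times as well), gives
\[
\E\Big[\sup_{t\in\IT}\|M(t)\|_{\HH^\gamma}^{2p}\Big]\le c\,\E\Big[\Big(\int_0^T\|G(X(s))Q^{1/2}\|^2_{\mathcal{L}_2(\HH^\gamma)}\dd s\Big)^{p}\Big]\le c\Big(1+\sup_{s\in\IT}\E\big[\|X(s)\|_\HH^{2p}\big]\Big).
\]
The right-hand side is controlled by \eqref{bo_00}. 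Combining the three estimates yields \eqref{bo_01}.
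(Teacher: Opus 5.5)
Your proposal is correct and follows the route the paper takes by citation. Existence, uniqueness and \eqref{bo_00} come from the fixed-point and Gronwall argument behind \cite[Theorem~7.2]{MR3236753}, and \eqref{bo_01} comes from term-by-term estimates of the mild formulation in $\HH^\gamma$ as in \cite[Proposition~3.1]{MR3353942}. Your one variation is to write $E(t-s)=E(t)E(-s)$ and apply BDG to the martingale $M$, which gives continuity and the supremum inside the expectation without the factorization method; the paper uses this same device for $I_2$ in the proofs of Theorems~\ref{spa_thm} and~\ref{tem_thm}.
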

The existence and uniqueness of the mild solution~\eqref{mild_form_comp} as well as the moment bound~\eqref{bo_00} follow from Lemma~\ref{lip_comp} and \cite[Theorem~7.2]{MR3236753}, see also \cite{MR3484400,MR3353942} in the Euclidean setting. The second bound can be obtained by following the proof of \cite[Proposition~3.1]{MR3353942}.

\section{Spatial semi-discretization}\label{sec-space}
In this section, we consider the spectral method used in \cite{MR4462619} for the linear stochastic wave equation on the sphere and apply it to the semilinear SPDE~\eqref{stoc_wave}, or its equivalent abstract formulation~\eqref{abstract_eq}. We first show properties of the semi-discrete solution and then prove its rate of strong convergence. 

Since the spherical harmonics build a basis for the space $L^2(\IS^2)$, one can express the two components of the exact solution $X(t)$ of the SPDE~\eqref{abstract_eq} as the following series expansions
\begin{align}\label{serieX}
u(t)=\sum_{\ell=0}^\infty\sum_{m=-\ell}^\ell u^{\ell,m}(t)Y_{\ell, m}\quad\text{and}\quad 
v(t)=\sum_{\ell=0}^\infty\sum_{m=-\ell}^\ell v^{\ell,m}(t)Y_{\ell, m}, 
\end{align}
where $t\in\IT$. Here, one has $u^{\ell,m}(t)=\langle u(t), Y_{\ell,m}\rangle_{L^2(\IS^2)}$ and $v^{\ell,m}(t)\langle v(t), Y_{\ell,m}\rangle_{L^2(\IS^2)}$ for the coefficients of the series expansions.
We set $X^{\ell,m}(t)=\left(u^{\ell,m}(t),v^{\ell,m}(t)\right)^T$ for $\ell\in \mathbb{N}_0$ and $m=-\ell,\dots,\ell$ and denote (with a slight abuse of notation)
\begin{equation}
\label{serieXbis}
X(t)=\sum_{\ell=0}^{\infty}\sum_{m=-\ell}^{\ell} {X}^{\ell,m}(t)Y_{\ell,m}.
\end{equation}
In order to numerically approximate solutions to the SPDE~\eqref{abstract_eq} in space, we truncate the series expansion~\eqref{serieX} at a fixed integer $\kappa\ge 1$. The semi-discrete solution is thus given by similar series expansions
\begin{align}\label{sp_appr}
u^\kappa(t)=\sum_{\ell=0}^\kappa\sum_{m=-\ell}^\ell \widehat{u^{\ell,m}}(t)Y_{\ell, m}\quad\text{and}\quad 
v^\kappa(t)=\sum_{\ell=0}^\kappa\sum_{m=-\ell}^\ell \widehat{v^{\ell,m}}(t)Y_{\ell, m}
\end{align}
or 
\begin{equation}
\label{sp_apprbis}
X^\kappa(t)=\sum_{\ell=0}^{\kappa}\sum_{m=-\ell}^{\ell} 
\widehat{X^{\ell,m}}(t)Y_{\ell,m}.
\end{equation}
Similarly, one has $\widehat{u^{\ell,m}}(t)=\langle u^\kappa(t),Y_{\ell,m}\rangle_{L^2(\IS^2)}$ and $\widehat{v^{\ell,m}}(t)=\langle v^\kappa(t),Y_{\ell,m}\rangle_{L^2(\IS^2)}$. 
Observe that the semi-discrete solution~\eqref{sp_apprbis} verifies the stochastic evolution equation 
\begin{equation}
\label{abstract_spectral}
\dd X^\kappa(t)=A^\kappa X^\kappa(t)\dd t+\mathcal{P}_\kappa F(X^\kappa(t))\dd t+\mathcal{P}_\kappa G(X^\kappa(t))\dd W(t), \qquad X^\kappa(0)=\mathcal{P}_\kappa X_0,
\end{equation}
where $\mathcal{P}_\kappa$ is the projection operator $\mathcal{P}_\kappa u=\sum_{\ell=0}^{\kappa}\sum_{m=-\ell}^{\ell} u^{\ell,m}Y_{\ell,m}$, 
for $u=\sum_{\ell=0}^{\infty}\sum_{m=-\ell}^{\ell} u^{\ell,m}Y_{\ell,m}\in L^2(\mathbb{S}^2)$. Observe also that we have an abuse of notation and set $\mathcal{P}_\kappa X=\left(\mathcal{P}_\kappa x_1,\mathcal{P}_\kappa x_2\right)^T$ for $X=(x_1,x_2)^T$. The linear operator $A^\kappa$ is defined as 
$$
A^\kappa=\begin{pmatrix}
0& I_\kappa\\ \Delta^\kappa_{\mathbb{S}^2}&0\end{pmatrix}
$$
with $I_\kappa,\Delta^\kappa_{\IS^2}\colon\mathcal{P}_\kappa L^2(\IS^2) \to\mathcal{P}_\kappa L^2(\IS^2)$ 
defined as $I_\kappa=I\mathcal{P}_\kappa=\mathcal{P}_\kappa I$ and 
$\Delta^\kappa_{\IS^2}=\Delta_{\IS^2}\mathcal{P}_\kappa=\mathcal{P}_\kappa \Delta_{\IS^2}$ with $\mathcal{P}_\kappa L^2(\IS^2) = \text{span}(Y_{\ell,m}:\ell=0,\ldots,\kappa\:\text{and}\: m=-\ell,\ldots,\ell)$. 
Since $I_\kappa=I, \Delta^\kappa_{\IS^2}=\Delta_{\IS^2}$ and $A^\kappa=A$ when acting on elements from the finite-dimensional space $\mathcal{P}_\kappa L^2(\IS^2)$, below, we will use both notations when appropriate.

It is a direct implication that the semi-discrete problem~\eqref{abstract_spectral} has the mild formulation 
\begin{equation}
\label{mild_trunc}
X^\kappa(t)=E_\kappa(t)\mathcal{P}_\kappa X_0+\int_{0}^{t} E_\kappa(t-s)\mathcal{P}_\kappa F(X^\kappa(s))\dd s
+\int_{0}^{t} E_\kappa(t-s)\mathcal{P}_\kappa G(X^\kappa(s))\dd W(s), 
\end{equation}
where the group~$E_\kappa$ is defined as $E_\kappa(t)=E(t)\mathcal{P}_\kappa=\mathcal{P}_\kappa E(t)$ and similarly for the discrete cosine and sine operators. Since $E(t)=E_\kappa(t)$, and simlarly for the cosine and sine operators, when acting on~$\mathcal{P}_\kappa L^2(\IS^2)$, we may use both notations below. 

Note that the operators $\mathcal{P}_\kappa$ and $\Delta_{\mathbb{S}^2}$ commute. Furthermore, we will make use of the following result on the projection operator $\mathcal{P}_\kappa$.
\begin{lem}
\label{lem_p}
For any $s\in\mathbb{R}$, $\kappa\in\mathbb{N}$ and $w\in H^s(\mathbb{S}^2)$, the linear operators $\mathcal{P}_\kappa$ and $I-\mathcal{P}_\kappa$ satisfy
\begin{equation}
\label{lem_p1}
\left\|\mathcal{P}_\kappa w\right\|_{s}\le \left\| w\right\|_{s} \quad  \text{and} \quad
\left\|\left(I-\mathcal{P}_\kappa \right)w\right\|_{s}\le \left\| w\right\|_{s}.
\end{equation}
Furthermore, for any $s\ge 0, \kappa\in\mathbb{N}$ and  $W=(w_1,w_2)^T \in {\HH}^{s}$, the linear operator $I-\mathcal{P}_\kappa$ satisfies
\begin{equation}
\label{lem_p2}
\left\|(I-\mathcal{P}_\kappa)W\right\|_{\HH}\le \kappa^{-s}\left\|W\right\|_{{\HH}^s},
\end{equation}
where, in an abuse of notation, we set $\left(I-\mathcal{P}_\kappa\right)W=\left(\left(I-\mathcal{P}_\kappa\right)w_1,\left(I-\mathcal{P}_\kappa\right)w_2\right)^T$.
\end{lem}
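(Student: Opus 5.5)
The plan is to argue entirely in the spherical harmonic coefficients, as in the proofs of Lemma~\ref{lem_op} and Lemma~\ref{lem_op2}. For $w=\sum_{\ell,m} w^{\ell,m}Y_{\ell,m}\in H^s(\IS^2)$, orthonormality of the spherical harmonics gives
\begin{equation*}
\|w\|_s^2=\sum_{\ell=0}^\infty\sum_{m=-\ell}^\ell (1+\ell(\ell+1))^s|w^{\ell,m}|^2 .
\end{equation*}
By definition of $\mathcal{P}_\kappa$, the coefficients of $\mathcal{P}_\kappa w$ are $w^{\ell,m}$ for $\ell\le\kappa$ and zero otherwise. The coefficients of $(I-\mathcal{P}_\kappa)w$ are $w^{\ell,m}$ for $\ell>\kappa$ and zero otherwise.

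For \eqref{lem_p1}, the series for $\|\mathcal{P}_\kappa w\|_s^2$ and $\|(I-\mathcal{P}_\kappa)w\|_s^2$ are sub-sums of the series for $\|w\|_s^2$. All terms are nonnegative, so both bounds are immediate for every $s\in\R$. In other words, $\mathcal{P}_\kappa$ and $I-\mathcal{P}_\kappa$ are orthogonal projections in every $H^s$, because they commute with $(I-\Delta_{\IS^2})^{s/2}$.

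For \eqref{lem_p2}, take $s\ge0$ and a single component $w\in H^{r+s}$. Then
\begin{equation*}
\|(I-\mathcal{P}_\kappa)w\|_r^2=\sum_{\ell>\kappa}\sum_{m}(1+\ell(\ell+1))^{-s}(1+\ell(\ell+1))^{r+s}|w^{\ell,m}|^2 .
\end{equation*}
For $\ell\ge\kappa+1$ we have $1+\ell(\ell+1)\ge \ell^2\ge \kappa^2$. Since $s\ge0$, this gives $(1+\ell(\ell+1))^{-s}\le\kappa^{-2s}$. Hence $\|(I-\mathcal{P}_\kappa)w\|_r\le\kappa^{-s}\|w\|_{r+s}$. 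I apply this with $r=0$ and $w=w_1$, and with $r=-1$ and $w=w_2$. Using the definition \eqref{norm_def} of the $\HH$- and $\HH^s$-norms, summing the squares gives
\begin{equation*}
\|(I-\mathcal{P}_\kappa)W\|_\HH^2=\|(I-\mathcal{P}_\kappa)w_1\|_0^2+\|(I-\mathcal{P}_\kappa)w_2\|_{-1}^2\le \kappa^{-2s}\left(\|w_1\|_s^2+\|w_2\|_{s-1}^2\right)=\kappa^{-2s}\|W\|_{\HH^s}^2 .
\end{equation*}

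There is no real obstacle. The only point needing care is the eigenvalue inequality $1+\ell(\ell+1)\ge\kappa^2$ for $\ell>\kappa$, which is where the assumption $s\ge0$ enters: it fixes the direction of the monotonicity of $x\mapsto x^{-s}$. The zero eigenvalue at $\ell=0$ plays no role, because the norms use $I-\Delta_{\IS^2}$ and $\ell=0$ is never in the tail.
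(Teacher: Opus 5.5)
Your proposal is correct and follows essentially the same route as the paper. It expands in spherical harmonics, notes that the norms of $\mathcal{P}_\kappa w$ and $(I-\mathcal{P}_\kappa)w$ are sub-sums of $\|w\|_s^2$, and for the tail estimate factors out $(1+\ell(\ell+1))^{-s}\le\kappa^{-2s}$ for $\ell>\kappa$ in each component before summing. Your explicit check that $1+\ell(\ell+1)\ge\kappa^2$, and that this is where $s\ge 0$ is needed, fills in a step the paper leaves implicit.
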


\begin{proof}
We start with the proof of the first claim in~\eqref{lem_p1}.

Let us write $w$ as the series expansion $ w=\sum_{\ell=0}^{\infty}\sum_{m=-\ell}^{\ell}w^{\ell,m}Y_{\ell,m}$. 
The definition of the norms and the orthonormality of the spherical harmonics yield
$$
\begin{aligned}
\left\|\mathcal{P}_\kappa w\right\|^2_{s} 
&= \left\|\left(I-\Delta_{\mathbb{S}^2}\right)^{\frac{s}{2}}\mathcal{P}_\kappa w\right\|_{L^2(\mathbb{S}^2)}^2
=  \left\|\sum_{\ell = 0}^{\kappa}\sum_{m=-\ell}^{\ell}\left(1+\ell(\ell+1)\right)^{\frac{s}{2}}w^{\ell,m}Y_{\ell,m}\right\|_{L^2(\mathbb{S}^2)}^2\\
& =  \sum_{\ell = 0}^{\kappa}\sum_{m=-\ell}^{\ell}\left(1+\ell(\ell+1)\right)^{s}\big|w^{\ell,m}\big|^2
\le \sum_{\ell = 0}^{\infty}\sum_{m=-\ell}^{\ell}\left(1+\ell(\ell+1)\right)^{s}\big|w^{\ell,m}\big|^2
=\|w\|_s^2,
\end{aligned}
$$
which shows the first claim.

The second bound in \eqref{lem_p1} can be shown similarly. The proof is thus omitted. 

We continue with the proof of~\eqref{lem_p2} and expand first $ w_i=\sum_{\ell=0}^{\infty}\sum_{m=-\ell}^{\ell}w_i^{\ell,m}Y_{\ell,m}$, $i=1,2$. Using similar arguments as above, we thus obtain the estimates
$$
\begin{aligned}
\left\|(I-\mathcal{P}_\kappa)W\right\|^2_{\HH} &= \left\|(I-\mathcal{P}_\kappa)w_1\right\|^2_{L^2(\mathbb{S}^2)}+\left\|\left(I-\Delta_{\mathbb{S}^2}\right)^{-\frac{1}{2}}(I-\mathcal{P}_\kappa)w_2\right\|^2_{L^2(\mathbb{S}^2)}\\
&=\sum_{\ell=\kappa+1}^{\infty}\sum_{m=-\ell}^{\ell}\left(1+\ell(\ell+1)\right)^{-s}\left(
\left(1+\ell(\ell+1)\right)^{s}\big|w_1^{l,m}\big|^2+\left(1+\ell(\ell+1)\right)^{s-1}\big|w_2^{l,m}\big|^2\right)\\
&\le  \kappa^{-2s}\left(\left\|\left(I-\mathcal{P}_\kappa\right)w_1\right\|^2_{s}+\left\|\left(I-\mathcal{P}_\kappa\right)w_2\right\|^2_{s-1}\right)
 \le \kappa^{-2s}\left\|W\right\|^2_{\HH^s},
\end{aligned}
$$
where we have used the second estimate in~\eqref{lem_p1} in the last inequality.
\end{proof}

As a first step in the convergence analysis of the spectral method, we state the well-posedness of the semi-discrete solution $X^\kappa$. The proof of this result is similar to the proof of Theorem~\ref{wp_thm}. We thus omit it.
\begin{prop}
\label{wp_trunc}
Given Assumption~\ref{ass_compact} and $X_0\in L^{2p}\left(\Omega; \HH\right)$ for some $p\ge 1$, for any $T\in(0,\infty)$,  there exists, up to modification, a unique mild solution~\eqref{mild_trunc} to the semi-discrete problem~\eqref{abstract_spectral}. Moreover, such a solution satisfies
\begin{equation}
\label{bo_0_H}
\sup_{\kappa \in \N}\sup_{t\in\IT}\ \mathbb{E}\bigg[\left\|X^\kappa(t)\right\|^{2p}_{\HH}\bigg]\le c\left(\left\|X_0\right\|_{L^{2p}(\Omega;\HH)}^{2p}+1\right).
\end{equation}
Furthermore, if $X_0\in L^{2p}\left(\Omega; {\HH}^\beta\right)$, for some $\beta\ge0$ and $p\ge 1$, then, one has the estimates
\begin{equation}
\label{bo_0}
\sup_{\kappa \in \N}\ \mathbb{E}\bigg[\sup_{t\in\IT}\left\|X^\kappa(t)\right\|^{2p}_{{\HH}^\gamma}\bigg]\le c\left(\left\|X_0\right\|_{L^{2p}(\Omega;{\HH}^\beta)}^{2p}+1\right)
\end{equation}
with $\gamma \in [0,\min\left(1,\beta,\delta\right)]$.
\end{prop}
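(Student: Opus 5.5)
The plan is to treat \eqref{abstract_spectral} as a stochastic evolution equation on the finite-dimensional (hence closed) subspace $\mathcal{P}_\kappa \HH$. Equivalently, one can view it on all of $\HH$ with nonlinearities $\mathcal{P}_\kappa F$ and $\mathcal{P}_\kappa G$, and then mimic the argument behind Theorem~\ref{wp_thm}, tracking that every constant is independent of $\kappa$.

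\textbf{Step 1: Lipschitz and growth bounds uniform in $\kappa$.} By Lemma~\ref{lem_p}, $\|\mathcal{P}_\kappa w\|_s\le\|w\|_s$ for every $s$. Lemma~\ref{lip_comp} therefore transfers verbatim to $\mathcal{P}_\kappa F$ and $\mathcal{P}_\kappa G$ with the same constants $\tilde L_1,\tilde L_2$. Moreover, $E_\kappa(t)=\mathcal{P}_\kappa E(t)$ obeys the bounds of Lemma~\ref{lem_op} and Lemma~\ref{lem_op2} with the same constants. Existence and uniqueness of a continuous mild solution \eqref{mild_trunc} then follow from \cite[Theorem~7.2]{MR3236753}. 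Alternatively, since the equation is a finite-dimensional SDE with globally Lipschitz coefficients, they follow from classical SDE theory.

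\textbf{Step 2: the moment bound \eqref{bo_0_H}.} Take the $2p$-th moment of \eqref{mild_trunc} in $\HH$ and treat the three terms separately:
\begin{itemize}
\item[(i)] the initial term, using $\|E_\kappa(t)\|_{\mathcal{L}(\HH)}\le 2\max(T,\sqrt{3/2})$ from Lemma~\ref{lem_op} together with Lemma~\ref{lem_p};
\item[(ii)] the drift term, using H\"older's inequality and the linear growth bound;
\item[(iii)] the stochastic integral, using the Burkholder--Davis--Gundy inequality for $\mathbb{E}\|\int_0^t E_\kappa(t-s)\mathcal{P}_\kappa G(X^\kappa(s))\dd W(s)\|_\HH^{2p}\le c\,\mathbb{E}(\int_0^t\|E_\kappa(t-s)\mathcal{P}_\kappa G(X^\kappa(s))Q^{1/2}\|^2_{\mathcal{L}_2(\HH)}\dd s)^p$, followed by the growth bound.
\end{itemize}
This gives $\mathbb{E}\|X^\kappa(t)\|^{2p}_\HH\le c(\|X_0\|^{2p}_{L^{2p}(\Omega;\HH)}+1)+c\int_0^t\mathbb{E}\|X^\kappa(s)\|_\HH^{2p}\dd s$ with $c$ independent of $\kappa$. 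The left-hand side is finite a priori, since the solution of a finite-dimensional SDE has finite moments (or one can localize with stopping times). Gronwall's lemma then yields \eqref{bo_0_H} uniformly in $\kappa$.

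\textbf{Step 3: the regularity bound \eqref{bo_0}.} Follow the proof of \cite[Proposition~3.1]{MR3353942}, as for \eqref{bo_01}. Estimate $\|X^\kappa(t)\|_{\HH^\gamma}$ termwise:
\begin{itemize}
\item[(i)] the initial term is bounded by $c\|X_0\|_{\HH^\beta}$, since $\gamma\le\beta$ and the norms are monotone in $s$;
\item[(ii)] the drift term uses $\|F(X)\|_{\HH^\gamma}=\|f(u)\|_{\gamma-1}\le\|f(u)\|_{0}$ for $\gamma\le 1$;
\item[(iii)] the noise term uses $\|G(X)Q^{1/2}\|_{\mathcal{L}_2(\HH^\gamma)}=\|g(u)Q^{1/2}\|_{\mathcal{L}_2(H^{\gamma-1})}\le\|g(u)Q^{1/2}\|_{\mathcal{L}_2(H^{\delta-1})}$ for $\gamma\le\delta$.
\end{itemize}
Both (ii) and (iii) are then controlled by $L_2(1+\|u\|_0)$, which is in turn controlled by \eqref{bo_0_H}.

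The main obstacle is placing the supremum inside the expectation for the stochastic convolution, because $E_\kappa(t-s)$ depends on $t$ and the integral is therefore not a martingale. Since $E_\kappa$ is a group, I would factor $E_\kappa(t-s)=E_\kappa(t)E_\kappa(-s)$ and write the stochastic convolution as $E_\kappa(t)\int_0^tE_\kappa(-s)\mathcal{P}_\kappa G(X^\kappa(s))\dd W(s)$. The group bound of Lemma~\ref{lem_op}, $\|E_\kappa(r)\|_{\mathcal{L}(\HH^\gamma)}\le 2\max(T,\sqrt{3/2})$ for $|r|\le T$, is uniform in $\kappa$. This lets Doob's and BDG's maximal inequalities be applied to the martingale $\int_0^tE_\kappa(-s)\cdots\dd W(s)$. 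The integrand is then bounded in $\mathcal{L}_2(\HH^\gamma)$ via the growth bound, and the $\sup$-in-$t$ of $\mathbb{E}\|X^\kappa\|^{2p}_\HH$ from Step~2 completes the estimate. All constants come from Lemmas~\ref{lem_op}, \ref{lip_comp}, and \ref{lem_p}, so none depend on $\kappa$, which gives the supremum over $\kappa\in\N$.
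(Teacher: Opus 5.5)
Your proposal is correct and follows the route the paper intends: the paper omits this proof and defers to the argument for Theorem~\ref{wp_thm}, namely \cite[Theorem~7.2]{MR3236753} with $\kappa$-uniform Lipschitz and growth constants (which you correctly obtain from Lemmas~\ref{lem_p} and~\ref{lip_comp}), together with the regularity argument of \cite[Proposition~3.1]{MR3353942}. Your factorization $E_\kappa(t-s)=E_\kappa(t)E_\kappa(-s)$, used to move the supremum inside the expectation, is the same device the paper uses in the proof of Theorem~\ref{spa_thm}, and your termwise $\HH^\gamma$ estimates show explicitly why the condition $\gamma\le\min(1,\beta,\delta)$ is needed.
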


The next step in showing strong convergence of the spectral method is to prove 
time regularity of the first component of the semi-discrete solution~\eqref{sp_appr}.
\begin{prop}
\label{time_reg}
Given Assumption~\ref{ass_compact} and $X_0\in L^{2p}(\Omega; {\HH}^\beta)$, for some $\beta\ge0$ and $p\ge 1$, there exists a positive constant $c$ such that, for any $0\le t_1\le t_2\le T$, the semi-discrete solution $u^\kappa$ satisfies
$$
\sup_{\kappa\in\mathbb{N}} \big\|u^\kappa(t_2)-u^\kappa(t_1)\big\|_{L^{2p}(\Omega; L^2(\mathbb{S}^2))}\le c \left|t_2-t_1\right|^{\min(1,\beta,\delta)}.
$$
\end{prop}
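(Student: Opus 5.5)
We write $\gamma=\min(1,\beta,\delta)$ and work from the mild formulation~\eqref{mild_trunc}, reading off its first component. With $E_\kappa(t_2)=E_\kappa(t_2-t_1)E_\kappa(t_1)$ we obtain
\begin{equation*}
X^\kappa(t_2)-X^\kappa(t_1)=\bigl(E_\kappa(t_2-t_1)-I\bigr)X^\kappa(t_1)+\int_{t_1}^{t_2}E_\kappa(t_2-s)\mathcal{P}_\kappa F(X^\kappa(s))\dd s+\int_{t_1}^{t_2}E_\kappa(t_2-s)\mathcal{P}_\kappa G(X^\kappa(s))\dd W(s).
\end{equation*}
Since $\|u\|_{L^2(\IS^2)}=\|u\|_0\le\|(u,v)^T\|_{\HH}$, it suffices to bound each term in $L^{2p}(\Omega;\HH)$, or more precisely to bound its first component in $L^{2p}(\Omega;L^2(\IS^2))$. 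All constants must be independent of $\kappa$; this follows from Lemma~\ref{lem_p}, which shows that $\mathcal{P}_\kappa$ is a contraction in every $H^s$.

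\textbf{First term.} We use Lemma~\ref{lem_op2} with $s=0$ and $q=\gamma\in[0,1]$:
\begin{equation*}
\|(E(t_2-t_1)-I)X^\kappa(t_1)\|_{\HH}\le\max(4,2T)\,(t_2-t_1)^{\gamma}\,\|X^\kappa(t_1)\|_{\HH^\gamma}.
\end{equation*}
Taking the $L^{2p}(\Omega)$ norm and applying the uniform bound~\eqref{bo_0} of Proposition~\ref{wp_trunc} gives $c\,(t_2-t_1)^\gamma$, uniformly in $\kappa$.

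\textbf{Drift term.} The first component of $E_\kappa(t_2-s)\mathcal{P}_\kappa F$ is $(-\Delta_{\IS^2})^{-1/2}S(t_2-s)\mathcal{P}_\kappa f(u^\kappa(s))$. By Lemma~\ref{lem_op} with $s=0$, its $L^2(\IS^2)$ norm is bounded by $c\|f(u^\kappa(s))\|_{-1}\le c\|f(u^\kappa(s))\|_{0}$. Assumption~\ref{ass_compact} together with Minkowski's inequality and~\eqref{bo_0_H} then gives a bound $c(t_2-t_1)\le cT^{1-\gamma}(t_2-t_1)^\gamma$.

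\textbf{Stochastic term.} This is the main point of care. We apply the Burkholder--Davis--Gundy inequality, or Da Prato--Zabczyk's moment inequality for stochastic integrals, to obtain
\begin{equation*}
\Bigl\|\int_{t_1}^{t_2}\ldots\Bigr\|_{L^{2p}(\Omega;L^2)}\le c\Bigl(\int_{t_1}^{t_2}\bigl\|\,\|(-\Delta_{\IS^2})^{-1/2}S(t_2-s)\mathcal{P}_\kappa g(u^\kappa(s))Q^{1/2}\|_{\mathcal{L}_2(H^0)}\bigr\|_{L^{2p}(\Omega)}^2\dd s\Bigr)^{1/2}.
\end{equation*}
Lemma~\ref{lem_op}, applied columnwise in the Hilbert--Schmidt norm with $s=\delta$ if $\delta\le1$ (and otherwise simply $s=0$ together with the embedding $H^{\delta-1}\hookrightarrow H^{-1}$), bounds the integrand by $c\|g(u^\kappa(s))Q^{1/2}\|_{\mathcal{L}_2(H^{-1})}\le c\|g(u^\kappa(s))Q^{1/2}\|_{\mathcal{L}_2(H^{\delta-1})}$. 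Assumption~\ref{ass_compact} and~\eqref{bo_0_H} then yield a bound $c(t_2-t_1)^{1/2}$.

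\textbf{Main obstacle.} The factor $(t_2-t_1)^{1/2}$ is only enough when $\gamma\le 1/2$, so for larger $\gamma$ we need an additional gain. Our plan is to exploit the $H^{\delta-1}$ regularity of $gQ^{1/2}$: the operator $(-\Delta_{\IS^2})^{-1/2}S(r)$ maps $H^{\delta-1}$ into $L^2$ with norm $\le cr^{\min(\delta,1)}$, by the $q$-interpolated bound in Lemma~\ref{lem_op2} with $s=0$ and $q=\min(\delta,1)$. The $\ell=0$ mode contributes $r$, which is fine. This gives
\begin{equation*}
\int_{t_1}^{t_2}(t_2-s)^{2\min(\delta,1)}\dd s\le c(t_2-t_1)^{1+2\min(\delta,1)},
\end{equation*}
which is at least of order $(t_2-t_1)^{2\gamma}$. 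Hence the stochastic term is bounded by $c(t_2-t_1)^{\gamma}$.

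Summing the three bounds and taking the supremum over $\kappa$ proves the claim.
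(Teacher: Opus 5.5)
Your proposal is correct and follows essentially the paper's proof. The paper uses the same decomposition (its $J_1,J_2$ are the first component of your $(E(t_2-t_1)-I)X^\kappa(t_1)$), bounds these via Lemma~\ref{lem_op2} and \eqref{bo_0}, and treats the stochastic integral, as in your refinement, with a Burkholder--Davis--Gundy inequality plus Lemma~\ref{lem_op2} at $q=\min(\delta,1)$. The only cosmetic difference is in the drift term: the paper also uses $q=1$ there and gets order $(t_2-t_1)^2$, whereas your cruder order-$(t_2-t_1)$ bound already suffices.
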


\begin{proof}
Set $\gamma=\min(1,\beta,\delta)$. From the definition of the mild solution~\eqref{mild_trunc}, we can split
$$
\mathbb{E}\left[\big\| u^\kappa(t_2)-u^\kappa(t_1)\big\|^{2p}_{L^2(\mathbb{S}^2)}\right]\le c\left(J_1^\kappa+J_2^\kappa+J_3^\kappa+J_4^\kappa\right),
$$
where 
$$
\begin{aligned}
&J_1^\kappa=\mathbb{E}\left[\big\| \left(C(t_2-t_1)-I\right)u^\kappa(t_1)\big\|^{2p}_{L^2(\mathbb{S}^2)}\right],\quad
J_2^\kappa=\mathbb{E}\left[\big\| \big(-\Delta_{\mathbb{S}^2}\big)^{-\frac{1}{2}}S(t_2-t_1)v^\kappa(t_1)\big\|^{2p}_{L^2(\mathbb{S}^2)}\right],\\
&J_3^\kappa=\mathbb{E}\left[\bigg\| \int_{t_1}^{t_2}(-\Delta_{\mathbb{S}^2})^{-\frac{1}{2}}S(t_2-s)\mathcal{P}_\kappa f(u^\kappa(s))\dd s\bigg\|^{2p}_{L^2(\mathbb{S}^2)}\right],\\
&J_4^\kappa=\mathbb{E}\left[\bigg\| \int_{t_1}^{t_2}(-\Delta_{\mathbb{S}^2})^{-\frac{1}{2}}S(t_2-s)\mathcal{P}_\kappa g(u^\kappa(s))\dd W(s) \bigg\|^{2p}_{L^2(\mathbb{S}^2)}\right].
\end{aligned}
$$
We will now bound these four terms uniformly in~$\kappa$. 

We start with using Lemma~\ref{lem_op2} 
and the uniform boundedness of~$X^\kappa$ in Proposition~\ref{wp_trunc} to obtain
$$
J_1^\kappa
\le c \left|t_2-t_1\right|^{2p \gamma}
\big\|u^\kappa(t_1)\big\|^{2p}_{L^{2p}(\Omega; H^{\gamma}(\mathbb{S}^2))}
\le c  \left|t_2-t_1\right|^{2p \gamma}
\mathbb{E}\big[\sup_{t\in\IT}\big\|X^\kappa(t)\big\|^{2p}_{{\HH}^\gamma(\mathbb{S}^2)}\big]
\le c  \left|t_2-t_1\right|^{2p \gamma}.
$$
For the second term, $J_2^\kappa$, similar arguments give the uniform bound
$$
J_2^\kappa
\le c \left|t_2-t_1\right|^{2p \gamma}
\big\|v^\kappa(t_1)\big\|^{2p}_{L^{2p}(\Omega;H^{\gamma-1}(\mathbb{S}^2))}
\le c \left|t_2-t_1\right|^{2p \gamma}
\mathbb{E}\big[\sup_{t\in\IT}\big\|X^\kappa(t)\big\|^{2p}_{{\HH}^{\gamma}(\mathbb{S}^2)}\big]
\le c  \left|t_2-t_1\right|^{2p \gamma}.
$$
In order to bound the term $J_3^\kappa$ uniformly, we first use the commutativity of the Laplace--Beltrami operator and the projection operator, Lemma~\ref{lem_p}, and H\"older's inequality
$$
\begin{aligned}
J_3^\kappa 
&\le
\mathbb{E}\bigg[\left(\int_{t_1}^{t_2}\big\|(-\Delta_{\mathbb{S}^2})^{-\frac{1}{2}}S(t_2-s)f(u^\kappa(s))\big\|_{L^2(\mathbb{S}^2)}\dd s\right)^{2p}\bigg]\\
&\le |t_2-t_1|^{2p-1}
\mathbb{E}\left[\int_{t_1}^{t_2}\big\|(-\Delta_{\mathbb{S}^2})^{-\frac{1}{2}}S(t_2-s)f(u^\kappa(s))\bigg\|^{2p}_{L^2(\mathbb{S}^2)}\dd s\right]
\le c\left|t_2-t_1\right|^{4p-1}
\mathbb{E}\bigg[\int_{t_1}^{t_2}\big\|f(u^\kappa(s))\big\|^{2p}_{L^2(\mathbb{S}^2)}\dd s\bigg],
\end{aligned}
$$
where we made use of Lemma~\ref{lem_op2} with $q=1$ in the last step. Next, we use the linear growth assumption on the coefficients of the SPDE from Assumption~\ref{ass_compact} and Proposition~\ref{wp_trunc} 
to get the bounds 
$$
\begin{aligned}
J_3^\kappa 
&\le c_1\left|t_2-t_1\right|^{4p}+c_2\left|t_2-t_1\right|^{4p-1}\int_{t_1}^{t_2}
\mathbb{E}\left[\big\|X^\kappa(s)\big\|_{\HH}^{2p}\right]\dd s
\le c \left|t_2-t_1\right|^{4p} \left(1 + \sup_{t\in\IT}\mathbb{E}\left[\big\|X^\kappa(t)\big\|_{\HH}^{2p}\right]\right)\\
& \le c \left|t_2-t_1\right|^{4p}.
\end{aligned}
$$
Finally, bounds for the term $J_4^\kappa$ are obtained using a Burkholder--Davis--Gundy type inequality (see \cite[Prop.~2.12]{K14} and \cite[Theorem~4.36]{MR3236753}), H\"older's inequality, Lemma~\ref{lem_p}, Lemma~\ref{lem_op2} with $q=\min(\delta,1)$ and Assumption~\ref{ass_compact}. Since $\min(\delta-1,0)-(\delta-1)\le 0$, this implies
$$
\begin{aligned}
J_4^\kappa
&\le c \, \mathbb{E}\bigg[\left(\int_{t_1}^{t_2}\bigg\|(-\Delta_{\mathbb{S}^2})^{-\frac{1}{2}}S(t_2-s)g(u^\kappa(s))Q^{\frac{1}{2}}\bigg\|^2_{\mathcal{L}_2(H^0)}\dd s\right)^p\bigg]\\
&\le c |t_2-t_1|^{p-1}\int_{t_1}^{t_2}\mathbb{E}\bigg[\bigg\|(-\Delta_{\mathbb{S}^2})^{-\frac{1}{2}}S(t_2-s)g(u^\kappa(s))Q^{\frac{1}{2}}\bigg\|^{2p}_{\mathcal{L}_2(H^0)}\bigg]\dd s\\
&\le c |t_2-t_1|^{p-1+2p\min(\delta,1)}\int_{t_1}^{t_2}\mathbb{E}\bigg[\bigg\|g(u^\kappa(s))Q^{\frac{1}{2}}\bigg\|^{2p}_{\mathcal{L}_2(H^{\min(\delta-1,0)})}\bigg]\dd s\\
&\le c\bigg\|\big(I-\Delta_{\mathbb{S}^2}\big)^{\frac{\min(\delta-1,0)-(\delta-1)}{2}}\bigg\|^{2p}_{\mathcal{L}(L^2(\mathbb{S}^2))} |t_2-t_1|^{p-1+2p\min(\delta,1)}\int_{t_1}^{t_2}\mathbb{E}\bigg[\bigg\|g(u^\kappa(s))Q^{\frac{1}{2}}\bigg\|^{2p}_{\mathcal{L}_2(H^{\delta-1})}\bigg]\dd s\\
&\le c
|t_2-t_1|^{p-1+2p\min(\delta,1)}\int_{t_1}^{t_2}\mathbb{E}\bigg[1+\bigg\|X^\kappa(s)\bigg\|^{2p}_\HH \bigg]\dd s.
\end{aligned}
$$
This yields, thanks to Proposition~\ref{wp_trunc}, the uniform bound
$$
\sup_{\kappa\in\mathbb{N}} J_4^\kappa\le c |t_2-t_1|^{p+2p\min(\delta,1)}.
$$

Gathering all the obtained estimates gives
$$
\sup_{\kappa\in\mathbb{N}} \mathbb{E}\big[\big\| u^\kappa(t_2)-u^\kappa(t_1)\big\|^{2p}_{L^2(\mathbb{S}^2)}\big]
\le c\left(|t_2-t_1|^{2p\gamma}+|t_2-t_1|^{4p}+ |t_2-t_1|^{p+2p\min(\delta,1)}\right)
$$
and therefore the desired result. 
\end{proof}
We conclude this section with the following result on the strong and almost sure convergence of the spectral method~\eqref{mild_trunc} when applied to the semilinear stochastic wave equation on the sphere~\eqref{abstract_eq}.
\begin{thm}
\label{spa_thm}
Given Assumption~\ref{ass_compact} and $X_0\in L^{2p}(\Omega; {\HH}^\beta(\mathbb{S}^2))$ for some $p\geq1$ and $\beta\ge 0$, there exists a positive constant~$c$ such that, for any $\kappa\in \mathbb{N}$, the error between the spatial approximation~$X^\kappa$ in~\eqref{mild_trunc} and the solution to~\eqref{abstract_eq} is bounded by
\begin{equation*}
\label{strong_space}
\left(\mathbb{E}\left[\sup_{t\in\IT}\big\|X(t)-X^\kappa(t)\big\|^{2p}_\HH\right]\right)^{\frac{1}{2p}}\le c\left(1+\left\|X_0\right\|_{L^{2p}(\Omega; {\HH}^\beta(\mathbb{S}^2))}\right) \kappa^{-\min\left(1,\beta,\delta\right)}.
\end{equation*} 
In addition, if $X_0\in L^{2p}(\Omega; {\HH}^\beta(\mathbb{S}^2))$ for all $p\geq 1$, then asymptotically for any $t\in \IT$
\begin{equation*}
\big\|X(t)-X^\kappa(t)\big\|_{\HH} \le \kappa^{-r},  \qquad \mathbb{P}-\text{a.s.},
\end{equation*}
for any $r< \min(1,\beta,\delta)$. 
\end{thm}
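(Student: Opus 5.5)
We compare the two mild formulations \eqref{mild_form_comp} and \eqref{mild_trunc}. Set $\gamma=\min(1,\beta,\delta)$ and $e^\kappa(t)=X(t)-X^\kappa(t)$. We split
\[
X(t)-X^\kappa(t)=(I-\mathcal{P}_\kappa)X(t)+\bigl(\mathcal{P}_\kappa X(t)-X^\kappa(t)\bigr).
\]
For the first term, Lemma~\ref{lem_p} gives $\|(I-\mathcal{P}_\kappa)X(t)\|_\HH\le\kappa^{-\gamma}\|X(t)\|_{\HH^\gamma}$. Hence, by \eqref{bo_01},
\[
\mathbb{E}\Bigl[\sup_{t}\|(I-\mathcal{P}_\kappa)X(t)\|_\HH^{2p}\Bigr]\le c\,\kappa^{-2p\gamma}\bigl(1+\|X_0\|^{2p}_{L^{2p}(\Omega;\HH^\beta)}\bigr).
\]
For the second term, $E(t)$ commutes with $\mathcal{P}_\kappa$, so the initial data cancel exactly. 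We are left with
\[
\mathcal{P}_\kappa X(t)-X^\kappa(t)=\int_0^tE_\kappa(t-s)\mathcal{P}_\kappa\bigl(F(X(s))-F(X^\kappa(s))\bigr)\dd s+\int_0^tE_\kappa(t-s)\mathcal{P}_\kappa\bigl(G(X(s))-G(X^\kappa(s))\bigr)\dd W(s).
\]
No additional projection error of $F,G$ appears, because both integrals carry $\mathcal{P}_\kappa$.

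For the drift integral, we use Lemma~\ref{lem_op} (which gives $\|E(t)\|_{\mathcal{L}(\HH)}\le c$ on $\IT$), Lemma~\ref{lem_p}, Lemma~\ref{lip_comp} and Hölder's inequality. This bounds its $\sup_t$ in $L^{2p}$ by $c\int_0^T\mathbb{E}[\sup_{r\le s}\|e^\kappa(r)\|_\HH^{2p}]\dd s$.

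The stochastic integral is the main obstacle, because $\sup_t$ sits inside the expectation and the convolution $\int_0^tE(t-s)\,\cdot\,\dd W(s)$ is not a martingale in $t$. The plan is to use the group property: write $E(t-s)=E(t)E(-s)$ and take $E(t)$ out of the integral, bounding it uniformly via Lemma~\ref{lem_op} (it is valid for negative times as well). What remains is the martingale $\int_0^tE(-s)\mathcal{P}_\kappa(G(X)-G(X^\kappa))\dd W(s)$. To this we apply the Burkholder--Davis--Gundy inequality, then Lemma~\ref{lem_op} again, Lemma~\ref{lip_comp} and Hölder's inequality. The result is again a bound $c\int_0^T\mathbb{E}[\sup_{r\le s}\|e^\kappa(r)\|^{2p}_\HH]\dd s$. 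If one prefers to avoid this trick, the factorization method would also work, but the group property is cleaner here.

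Collecting the terms, $\phi(t)=\mathbb{E}[\sup_{r\le t}\|e^\kappa(r)\|^{2p}_\HH]$ satisfies
\[
\phi(t)\le c\,\kappa^{-2p\gamma}\bigl(1+\|X_0\|^{2p}\bigr)+c\int_0^t\phi(s)\dd s.
\]
Here $\phi$ is finite by Theorem~\ref{wp_thm} and Proposition~\ref{wp_trunc}. Gronwall's lemma then yields the strong estimate; taking the $2p$-th root and using $(a+b)^{1/2p}\le a^{1/2p}+b^{1/2p}$ gives the stated form.

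For the almost sure statement, fix $r<\gamma$ and apply Markov's inequality:
\[
\mathbb{P}\bigl(\|X(t)-X^\kappa(t)\|_\HH>\kappa^{-r}\bigr)\le\kappa^{2pr}\,\mathbb{E}\|X(t)-X^\kappa(t)\|^{2p}_\HH\le c_p\,\kappa^{-2p(\gamma-r)}.
\]
Since $X_0\in L^{2p}$ for all $p$, we may choose $p$ with $2p(\gamma-r)>1$, which makes these probabilities summable in $\kappa$. The Borel--Cantelli lemma then gives the a.s. bound for all sufficiently large $\kappa$.
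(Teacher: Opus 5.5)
Your proposal is correct and follows the paper's proof essentially step by step. It uses the same splitting into $(I-\mathcal{P}_\kappa)X(t)$ and $\mathcal{P}_\kappa X(t)-X^\kappa(t)$, and the same Lipschitz-plus-H\"older bound for the drift. It also uses the group property to turn the stochastic convolution into a martingale before Burkholder--Davis--Gundy: the paper factors $E(t-s)=E(t-T)E(T-s)$ where you write $E(t)E(-s)$, which is equivalent by Lemma~\ref{lem_op}. Gr\"onwall then gives the strong bound, and Chebyshev with Borel--Cantelli gives the almost sure statement. Your Gr\"onwall step via $\phi(t)=\mathbb{E}[\sup_{r\le t}\|X(r)-X^\kappa(r)\|_\HH^{2p}]$ is, if anything, slightly more careful than the paper's, which states the integral inequality only at the final time $T$.
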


\begin{proof}
Let $\gamma=\min(1,\beta,\delta)$. We start by splitting the error into
\begin{equation*}
\label{decomp_0}
\mathbb{E}\left[\sup_{t\in\IT}\big\|X(t)-X^\kappa(t)\big\|^{2p}_{\HH}\right]
    \le c\left\{ 
        \mathbb{E}\left[\sup_{t\in\IT}\big\|\big(I-\mathcal{P}_\kappa\big)X(t)\big\|^{2p}_{\HH}\right]
        + \mathbb{E}\left[\sup_{t\in\IT}\big\|\mathcal{P}_\kappa X(t)-X^\kappa(t)\big\|^{2p}_{\HH}\right]
    \right\}.
\end{equation*}
For the first term, we apply Lemma~\ref{lem_p} and Proposition~\ref{wp_thm} to obtain
\begin{equation*}
\label{sec_term}
\begin{aligned}
\mathbb{E}\left[\sup_{t\in\IT}\big\|\big(I-\mathcal{P}_\kappa \big)X(t)\big\|^{2p}_{\HH}\right]
    &\le \kappa^{-2p\gamma} \, \mathbb{E}\left[\sup_{t\in\IT}\big\|X(t)\big\|^{2p}_{{\HH}^\gamma}\right]
    \le c \bigg(1+\left\|X_0\right\|^{2p}_{L^{2p}(\Omega; {\HH}^\beta)}\bigg) \kappa^{-2p\gamma}.
\end{aligned}
\end{equation*}
To bound the second term, we observe that
\begin{align*}
    & \left\| \mathcal{P}_\kappa X(t)-X^\kappa(t) \right\|^{2p}_\HH\\
        & \quad \le c \left\{ 
            \left\| \int_{0}^{t}E_\kappa(t-s)\mathcal{P}_\kappa\left(F(X(s))-F(X^\kappa(s))\right)\dd s\right\|_\HH^{2p}
            + \left\| \int_{0}^{t}E(t-s)\mathcal{P}_\kappa\left(G(X(s))-G(X^\kappa(s))\right)\dd W(s)\right\|^{2p}_\HH
        \right\}\\
        & \quad = c \, \left( I_1(t) + I_2(t)\right).
\end{align*}
We bound the term $I_1(t)$ applying H\"older's inequality, and Lemmas~\ref{lem_op}~and~\ref{lem_p}, and Lemma~\ref{lip_comp}. This yields
$$
\begin{aligned}
\E \left[ \sup_{t \in \IT} I_1(t) \right]
&\le c \, \E \left[ \sup_{t \in \IT}\int_{0}^{t}\big\|E(t-s)\mathcal{P}_\kappa \big(F(X(s))-F(X^\kappa(s))\big)\big\|^{2p}_\HH\dd s \right]\\
& \le c \, \E \left[\int_{0}^{T}\big\|F(X(s))-F(X^\kappa(s))\big\|^{2p}_{\HH}\dd s \right]
\le c \int_{0}^{T}\mathbb{E}\left[\sup_{s\in[0,t]} \big\|X(s)-X^\kappa(s)\big\|^{2p}_\HH\right]\dd t.
\end{aligned}
$$

Using the boundedness of the operator~$E$, the Burkholder--Davis--Gundy type inequality from \cite[Theorem~4.36]{MR3236753}, and H\"older's inequality, we obtain for the second term that 
\begin{align*}
   \E\left[\sup_{t\in\IT} I_2(t)\right] 
    & \le \E\left[\sup_{t\in\IT} \|E(t-T)\|_{\mathcal{L}(\HH)}^{2p} \left\| \int_{0}^{t}E(T-s)\mathcal{P}_\kappa\left(G(X(s))-G(X^\kappa(s))\right)\dd W(s)\right\|^{2p}_\HH\right]\\
    & \le c \, \E \left[ \int_0^T \|E(T-s)\mathcal{P}_\kappa\left(G(X(s))-G(X^\kappa(s))\right)Q^{1/2}\|_{\mathcal{L}_2(\HH)}^{2p} \, \dd s \right]
    \le c \int_0^T \E\left[\|X(s) - X^\kappa(s)\|_\HH^{2p}\right] \, \dd s\\
    & \le c \int_0^T \E\left[\sup_{s \in [0,t]} \|X(s) - X^\kappa(s)\|_\HH^{2p}\right] \, \dd t,
\end{align*}
where we applied in the later steps as for~$I_1(t)$ Lemmas~\ref{lem_op}, \ref{lem_p}, and~\ref{lip_comp}.

Finally, inserting the obtained bounds into~\eqref{decomp_0}, we get the estimates
$$
\mathbb{E}\left[\sup_{t\in\IT}\left\|X(t)-X^\kappa(t)\right\|^{2p}_{\HH}\right]
    \le c_1\left(1+\left\|X_0\right\|^{2p}_{L^{2p}(\Omega; {\HH}^\beta)}\right) \kappa^{-2p\gamma}
    + c_2  \int_{0}^{T}\mathbb{E}\left[\sup_{s\in[0,t]}\left\|X(s)-X^\kappa(s)\right\|^{2p}_\HH\right]\dd t.
$$
Since all expressions are finite by Propositions~\ref{wp_thm} and~\ref{wp_trunc}, an application of Gr\"onwall's inequality then yields the claimed $L^{2p}$ error estimate.

The bound on $\mathbb{P}$-a.s.\ convergence follows from the $p$ independent strong convergence rates. Indeed, for any $r<\min(1,\beta,\delta)$, Chebyschev's inequality yields
$$
\mathbb{P}\left(\left\|X(t)-X^\kappa(t) \right\|_{\HH}\ge \kappa^{-r}\right)
\le \kappa^{2p r}\mathbb{E}\left[\sup_{t\in\IT}\left\|X(t)-X^\kappa(t) \right\|^{2p}_{\HH}\right]
\le  c \kappa^{2p r} \kappa^{-2p\gamma}.
$$
For all $2 p\ge 1/(|\gamma-r|)$, the series $\sum_{\kappa=0}^{\infty}\kappa^{(r-\gamma)2 p}< +\infty$ converges, and an application of the Borel--Cantelli lemma concludes the proof.
\end{proof}

\section{Temporal discretization}\label{sec-time}

Now, we are ready to provide a temporal discretization of the semi-discrete solutions $X^\kappa(t)$ in \eqref{mild_trunc}, for $\kappa\in\mathbb{N}$. In this section, we propose and study the following stochastic exponential integrator also called stochastic trigonometric integrator in the present setting:
\begin{equation}
\label{exp_met}
X_n^\kappa =E(h)X_{n-1}^{\kappa}+h E(h)\mathcal{P}_\kappa F(X_{n-1}^\kappa)+E(h) \mathcal{P}_\kappa G(X_{n-1}^\kappa) \Delta W_{n-1}, \quad n=1,2,\dots,N,
\end{equation}
where $Nh=T$, with time step size $h>0$ and $\Delta W_{n-1}=W(t_n)-W(t_{n-1})$. By iteration and since $E$ is a $C_0$-group, the time integrator~\eqref{exp_met} can be rewritten as 
\begin{equation}
\label{exp_met_comp}
X_n^\kappa 
    = E(t_n)\mathcal{P}_\kappa X_0
        +\int_{0}^{t_n} E(t_n-\floor{s/h}h)\mathcal{P}_\kappa F\big(X^\kappa_{\floor{s/h}}\big)\,\dd s
        +\int_{0}^{t_n} E(t_n-\floor{s/h}h)\mathcal{P}_\kappa G(X^\kappa_{\floor{s/h}})\, \dd W(s),
\end{equation}
where $t_n = nh$ and $\floor{s/h}$ denotes the integer part of~$s/h$.

The following result provides a uniform bound for the numerical solution~\eqref{exp_met} of the stochastic wave equation on the sphere~\eqref{abstract_eq}. This result is shown by following the proof of Theorem~\ref{wp_thm} and \cite[Proposition~3.1]{MR3353942}. Its proof is therefore omitted.
\begin{prop}
\label{prop_bound_fully_discr}
Given Assumption~\ref{ass_compact} and $X_0\in L^{2p}\left(\Omega; {\HH}^\beta\right)$, for some $\beta\ge0$ and $p\ge 1$, the fully discrete approximation~\eqref{exp_met_comp} is uniformly bounded by
\begin{equation*}
\sup_{\kappa\in\mathbb{N}} \sup_{N\in\mathbb{N}} \ \mathbb{E}\bigg[\max_{n=1,\dots,N}\left\|X^\kappa_{n}\right\|^{2p}_{\HH^\gamma}\bigg]\le c\bigg(\left\|X_0\right\|^{2p}_{L^{2p}(\Omega; \HH^\beta)}+1\bigg),
\end{equation*}
with $\gamma \in [0,\min(\beta,\delta,1)]$.
\end{prop}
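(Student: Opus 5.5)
We work directly with the discrete mild representation~\eqref{exp_met_comp} and argue in two stages, as for Theorem~\ref{wp_thm}. First we prove a bound in $\HH$ without the maximum inside the expectation, then we upgrade to $\HH^\gamma$ with the maximum. Throughout, set $\gamma\in[0,\min(\beta,\delta,1)]$. All constants must be independent of $\kappa$ and $N$. Lemma~\ref{lem_op} gives $\|E(t)\|_{\mathcal{L}(\HH^s)}\le 2\max(T,\sqrt{3/2})$ for $|t|\le T$, uniformly in $s$, and Lemma~\ref{lem_p} gives $\|\mathcal{P}_\kappa\|_{\mathcal{L}(\HH^s)}\le1$. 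So the only places where $\kappa$ or $h$ could enter are the nonlinear terms, and these are controlled by Assumption~\ref{ass_compact}.

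\textbf{Step 1 (moment bound in $\HH$).} Take the $2p$-th moment in~\eqref{exp_met_comp} with $n$ fixed. The initial term is bounded by $c\,\E\|X_0\|_\HH^{2p}$. For the drift integral we use H\"older's inequality, the group bound and Lemma~\ref{lip_comp}, which give $c\int_0^{t_n}(1+\E\|X^\kappa_{\floor{s/h}}\|_\HH^{2p})\,\dd s$. For the stochastic integral we use the Burkholder--Davis--Gundy inequality \cite[Theorem~4.36]{MR3236753} and H\"older's inequality. Since the integrand is predictable, with $X^\kappa_{\floor{s/h}}$ being $\mathcal{F}_{t_{\floor{s/h}}}$-measurable, we obtain the same kind of bound using $\|G(X)Q^{1/2}\|_{\mathcal{L}_2(\HH)}\le \tilde L_2(1+\|X\|_\HH)$. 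Set $a_n=\max_{k\le n}\E\|X^\kappa_k\|_\HH^{2p}$. This yields $a_n\le c(1+\E\|X_0\|^{2p}_\HH)+c\,h\sum_{k<n}a_k$, and the discrete Gr\"onwall inequality gives $\sup_{n}\E\|X_n^\kappa\|_\HH^{2p}\le c(1+\|X_0\|^{2p}_{L^{2p}(\Omega;\HH)})$ uniformly in $\kappa,N$.

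\textbf{Step 2 (maximum and $\HH^\gamma$ norm).} We estimate $\E[\max_n\|X_n^\kappa\|^{2p}_{\HH^\gamma}]$ term by term in~\eqref{exp_met_comp}.

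For the initial term, $\|E(t_n)\mathcal{P}_\kappa X_0\|_{\HH^\gamma}\le c\|X_0\|_{\HH^\gamma}\le c\|X_0\|_{\HH^\beta}$.

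For the drift, note that $F(X)=(0,f(u))^T$ and $f(u)\in L^2=H^{\gamma-1+(1-\gamma)}$. Hence $\|F(X)\|_{\HH^\gamma}=\|f(u)\|_{\gamma-1}\le\|f(u)\|_{0}\le L_2(1+\|X\|_\HH)$ because $\gamma\le1$. Bounding the maximum over $n$ by the integral over $[0,T]$ and applying Step~1 controls this term.

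\textbf{Main obstacle: the stochastic convolution.} The maximum over $n$ cannot be passed through directly, because $t_n$ appears inside the integrand. We handle this as in the proof of Theorem~\ref{spa_thm} with the group trick: write $E(t_n-\floor{s/h}h)=E(t_n-T)E(T-\floor{s/h}h)$. Then $\|E(t_n-T)\|_{\mathcal{L}(\HH^\gamma)}\le c$, and the remaining process $M(t)=\int_0^t E(T-\floor{s/h}h)\mathcal{P}_\kappa G(X^\kappa_{\floor{s/h}})\,\dd W(s)$ is a genuine martingale. Its maximum over grid points is therefore controlled by the Burkholder--Davis--Gundy inequality. This gives the bound
\[
c\int_0^T\E\|G(X^\kappa_{\floor{s/h}})Q^{1/2}\|^{2p}_{\mathcal{L}_2(\HH^\gamma)}\,\dd s .
\]
Here $\|G(X)Q^{1/2}\|_{\mathcal{L}_2(\HH^\gamma)}=\|g(u)Q^{1/2}\|_{\mathcal{L}_2(H^{\gamma-1})}\le c\|g(u)Q^{1/2}\|_{\mathcal{L}_2(H^{\delta-1})}$ since $\gamma\le\delta$. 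By Assumption~\ref{ass_compact} this is at most $c(1+\|X\|_\HH)$.

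Combining the three terms with Step~1 gives the claim, uniformly in $\kappa$ and $N$. The only delicate point is checking that the Sobolev index shift in the noise term uses exactly $\gamma\le\delta$, and the shift in the drift uses $\gamma\le1$. The index $\beta$ enters only through the initial datum.
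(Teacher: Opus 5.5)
Your proposal is correct and is essentially the argument the paper has in mind. The paper omits the proof and points to the proof of Theorem~\ref{wp_thm} and \cite[Proposition~3.1]{MR3353942}, and your steps match that route: the discrete mild form~\eqref{exp_met_comp}, the uniform group and projection bounds from Lemmas~\ref{lem_op} and~\ref{lem_p}, linear growth from Assumption~\ref{ass_compact}, and a discrete Gr\"onwall argument for the $\HH$-moments. To bring the maximum inside the expectation you use the factorization $E(t_n-\floor{s/h}h)=E(t_n-T)E(T-\floor{s/h}h)$ with the Burkholder--Davis--Gundy inequality, exactly as in the proof of Theorem~\ref{spa_thm}, and your accounting of where $\gamma\le\beta$, $\gamma\le 1$ and $\gamma\le\delta$ are used is right.
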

    
We now state and prove the main result of the paper: the strong and almost-sure rates of convergence of the stochastic trigonometric integrator~\eqref{exp_met_comp}.
\begin{thm}
\label{tem_thm}
Given Assumption~\ref{ass_compact} and $X_0\in L^{2p}(\Omega; \HH^\beta)$ with $\beta \ge 0$ and $p\ge 1$, the error between the fully discrete solution~\eqref{exp_met_comp} and the spatial approximation~\eqref{mild_trunc} is uniformly bounded by
$$
\sup_{\kappa \in \N}\ \bigg(\mathbb{E}\bigg[\max_{n=1,\dots,N} \big\|X^\kappa(t_n)-X^\kappa_n\big\|^{2p}_{\HH}\bigg]\bigg)^{\frac{1}{2p}}
\le c h^{\min\left(1,\beta,\delta\right)}.
$$
In addition, if $X_0\in L^{2p}(\Omega; {\HH}^\beta(\mathbb{S}^2))$ for all $p\geq 1$, then asymptotically for any $0\leq t_n\leq T$
\begin{equation*}
\big\|X^\kappa(t_n)-X^\kappa_n \big\|_{\HH} \le h^{r},  \qquad \mathbb{P}-\text{a.s.},
\end{equation*}
for any $r< \min(1,\beta,\delta)$.
\end{thm}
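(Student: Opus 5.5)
We compare the mild formulation \eqref{mild_trunc} at $t=t_n$ with the discrete mild formulation \eqref{exp_met_comp}. Set $\gamma=\min(1,\beta,\delta)$ and $e_n=X^\kappa(t_n)-X^\kappa_n$. Since the initial terms $E(t_n)\mathcal{P}_\kappa X_0$ coincide, the error consists of a drift and a stochastic part. For $s\in[t_k,t_{k+1})$ we split each integrand as
\begin{align*}
E(t_n-s)\mathcal{P}_\kappa F(X^\kappa(s)) - E(t_n-t_k)\mathcal{P}_\kappa F(X^\kappa_k)
&= E(t_n-s)\big(I-E(s-t_k)\big)\mathcal{P}_\kappa F(X^\kappa(s))\\
&\quad + E(t_n-t_k)\mathcal{P}_\kappa\big(F(X^\kappa(s))-F(X^\kappa(t_k))\big)\\
&\quad + E(t_n-t_k)\mathcal{P}_\kappa\big(F(X^\kappa(t_k))-F(X^\kappa_k)\big),
\end{align*}
using the group property $E(t_n-t_k)=E(t_n-s)E(s-t_k)$. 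The same splitting is applied to $G(\cdot)\dd W$. This gives three types of terms: a semigroup-smoothing error, a time-regularity error of the semi-discrete solution, and a Lipschitz error term to be handled by Gronwall.

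For the first type of term, we use Lemma~\ref{lem_op2} with $q=\gamma$ to get $\|(E(s-t_k)-I)Y\|_\HH\le c h^\gamma\|Y\|_{\HH^\gamma}$. For $Y=F(X^\kappa(s))=(0,f(u^\kappa(s)))^T$ we have $\|Y\|_{\HH^\gamma}=\|f(u^\kappa)\|_{\gamma-1}\le\|f(u^\kappa)\|_{L^2}$ since $\gamma\le 1$. For the noise, $\|G(X^\kappa)Q^{1/2}\|_{\mathcal{L}_2(\HH^\gamma)}=\|g(u^\kappa)Q^{1/2}\|_{\mathcal{L}_2(H^{\gamma-1})}\le c\|g(u^\kappa)Q^{1/2}\|_{\mathcal{L}_2(H^{\delta-1})}$ since $\gamma\le\delta$. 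Combining this with Assumption~\ref{ass_compact}, Proposition~\ref{wp_trunc}, the bound on $E$ from Lemma~\ref{lem_op} and Lemma~\ref{lem_p}, we obtain $O(h^{\gamma})$ in $L^{2p}$. The maximum over $n$ requires care in the stochastic integral, because the integrand depends on $t_n$. As in the proof of Theorem~\ref{spa_thm}, we factor out $E(t_n-T)$, which is bounded uniformly on $\IT$ by Lemma~\ref{lem_op}, and write the integral as $E(t_n-T)\int_0^{t_n}E(T-s)(\dots)\dd W(s)$. The integrand is now independent of $n$, so the Burkholder--Davis--Gundy inequality for the supremum of the martingale applies.

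For the second type of term, we use Lemma~\ref{lip_comp}. Crucially, $F$ and $G$ depend only on the first component $u$, with $L^2$-Lipschitz bounds from Assumption~\ref{ass_compact}. Hence $\|F(X^\kappa(s))-F(X^\kappa(t_k))\|_\HH\le c\|u^\kappa(s)-u^\kappa(t_k)\|_{L^2}$, and the same bound holds for the Hilbert--Schmidt norm of the $G$ difference. Proposition~\ref{time_reg} then gives $c h^{\gamma}$ in $L^{2p}$ uniformly in $\kappa$. This is the key point, since the full $\HH$-valued solution is not Hölder in time with rate $\gamma$ in general, whereas $u^\kappa$ is. Using Hölder's inequality in time for the drift and the Burkholder--Davis--Gundy inequality (again after factoring $E(t_n-T)$) for the noise, these contributions are $O(h^{2p\gamma})$ after raising to the power $2p$.

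For the third type of term, Lemma~\ref{lip_comp} together with the same maximal inequalities yields
\[
\E\big[\max_{k\le n}\|e_k\|_\HH^{2p}\big]\le c h^{2p\gamma}+c\,h\sum_{j=0}^{n-1}\E\big[\max_{k\le j}\|e_k\|^{2p}_\HH\big].
\]
All quantities are finite by Propositions~\ref{wp_trunc} and~\ref{prop_bound_fully_discr}, so the discrete Gronwall lemma gives the strong rate, with constants independent of $\kappa$ and $N$. The almost sure statement then follows exactly as in Theorem~\ref{spa_thm}: apply Chebyshev's inequality with $h=T/N$, choose $p$ large enough that $\sum_N N^{2p(r-\gamma)}<\infty$, and conclude with Borel--Cantelli. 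The main obstacle is the uniform-in-$n$ maximal bound for the stochastic convolution terms and keeping every constant independent of $\kappa$. The growth of $\|E(t)\|$ in Lemma~\ref{lem_op} is harmless here because $t\le T$.
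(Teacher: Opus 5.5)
Your proposal is correct and follows essentially the same route as the paper. It uses the same three-way splitting of drift and noise errors: a group error via Lemma~\ref{lem_op2}, a time-regularity error via Proposition~\ref{time_reg} (using that $F,G$ depend only on $u$), and a Lipschitz term closed by a discrete Gronwall argument, with the maximum over $n$ handled by factoring out $E(t_n-T)$ before applying Burkholder--Davis--Gundy. The only difference is cosmetic: you telescope so that the group error acts on $F(X^\kappa(s))$ and the time-regularity term carries $E(t_n-t_k)$, whereas the paper applies the group error at the grid value $F(X^\kappa(\floor{s/h}h))$, and this changes nothing in the estimates.
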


\begin{proof}
Using the mild formulations of the semi-discrete and fully discrete solutions, \eqref{mild_trunc} and \eqref{exp_met_comp}, we split the error in six terms that we bound separately
$$
\mathbb{E}\bigg[\max_{n=1,\dots,N}\big\|X^\kappa(t_n)-X_{n}^\kappa \big\|^{2p}_{\HH}\bigg]\le c \left(J_1+J_2+J_3+I_1+I_2+I_3\right),
$$
where
$$
\begin{aligned}
   J_1 & =\mathbb{E}\left[\max_{n=1,\dots,N}\bigg\|
    \int_{0}^{t_n} E(t_n-s)
\mathcal{P}_\kappa\bigg( F(X^\kappa(s))- F\big(X^\kappa(\floor{s/h}h)\big)\bigg)\,\dd s\bigg\|^{2p}_{\HH}\right],\\
J_2 & =\mathbb{E}\left[\max_{n=1,\dots,N}\bigg\|
\int_{0}^{t_{n}}\big(E(t_n-s)-E(t_n-\floor{s/h}h)\big)\mathcal{P}_\kappa F\big(X^\kappa(\floor{s/h}h)\big)\,\dd s\bigg\|^{2p}_{\HH}\right],\\
J_3 & =\mathbb{E}\left[\max_{n=1,\dots,N}\bigg\|\int_{0}^{t_{n}}E(t_n-\floor{s/h}h)
\mathcal{P}_\kappa \big(F\big(X^\kappa(\floor{s/h}h)\big)-F(X^\kappa_{\floor{s/h}})\big)\,\dd s\bigg\|^{2p}_{\HH}\right],\\
I_1 & =\mathbb{E}\left[\max_{n=1,\dots,N}\bigg\|
    \int_{0}^{t_n} E(t_n-s)
\mathcal{P}_\kappa \left(G(X^\kappa(s))- G\big(X^\kappa(\floor{s/h}h))\right)\, \dd W(s)\bigg\|^{2p}_{\HH}\right],\\
I_2 & =\mathbb{E}\left[\max_{n=1,\dots,N}\bigg\|
\int_{0}^{t_{n}}\big(E(t_n-s)-E(t_n-\floor{s/h}h)\big)\mathcal{P}_\kappa \big(G\big(X^\kappa(\floor{s/h}h)\big)\,\dd W(s)\big)\,\bigg\|^{2p}_{\HH}\right],\\
I_3 & =\mathbb{E}\left[\max_{n=1,\dots,N}\bigg\|\int_{0}^{t_{n}}E(t_n-\floor{s/h}h)
\mathcal{P}_\kappa \big(\big(G\big(X^\kappa(\floor{s/h}h)\big)-G(X^\kappa_{\floor{s/h}})\big)\,\dd W(s)\big)\bigg\|^{2p}_{\HH}\right].
\end{aligned}
$$

Since $\|(I-\Delta_{\IS^2})^{-r}\|_{\mathcal{L}(H^0)} \le 1$ for all $r>0$, the first term~$J_1$ is uniformly bounded in~$\kappa$ by H\"older's inequality,  Lemma~\ref{lem_op}, Lemma~\ref{lem_p}, Assumption~\ref{ass_compact}, and Proposition~\ref{time_reg}:
$$
\begin{aligned}
J_1& \le c \, \mathbb{E}\bigg[\max_{n=1,\dots,N}
    \int_{0}^{t_n} \bigg\|f(u^\kappa(s))-f(u^\kappa(\floor{s/h}h))\bigg\|^{2p}_{-1}\,\dd s\bigg]\\
    &\le c \big\|\big(I-\Delta_{\mathbb{S}^2}\big)^{-\frac{1}{2}} \big\|^{2p}_{\mathcal{L}(H^0)}
    \mathbb{E}\bigg[\max_{n=1,\dots,N}
    \int_{0}^{t_n} \bigg\|f(u^\kappa(s))-f(u^\kappa(\floor{s/h}h))\bigg\|^{2p}_{L^{2}(\mathbb{S}^2)}\,\dd s\bigg]\\
    &\le c \int_{0}^{T} \mathbb{E}\bigg[\bigg\|u^\kappa(s)-u^\kappa(\floor{s/h}h)\bigg\|^{2p}_{L^{2}(\mathbb{S}^2)}\bigg]\,\dd s\\
  &\le c h^{2p\ \min(\beta,\delta,1)}.
\end{aligned}
$$
To bound $J_2$, we first observe that 
\begin{equation}
\label{eqn:bound_E}    
\|E(t_n-s)-E(t_n-\floor{s/h}h)\|_{\mathcal{L}(\HH^{\min(\beta,\delta,1)};\HH)}
    \le \|E(t_n - \floor{s/h}h)\|_{\mathcal{L}(\HH)} \|E(\floor{s/h}h -s) - I \|_{\mathcal{L}(\HH^{\min(\beta,\delta,1)};\HH)}
    \le c h^{\min(\beta,\delta,1)}
\end{equation}
by Lemmas~\ref{lem_op} and~\ref{lem_op2}.
Therefore, together with H\"older's inequality, Lemma~\ref{lem_p}, Assumption~\ref{ass_compact}, and Proposition~\ref{wp_trunc}, we obtain
$$
\begin{aligned}
J_2 &\le c \, \mathbb{E}\bigg[\max_{n=1,\dots,N}
\int_{0}^{t_{n}} \bigg\| \big(E(t_n-s)-E(t_n-\floor{s/h}h)\big)\mathcal{P}_\kappa F\big(X^\kappa(\floor{s/h}h)\big)\bigg\|^{2p}_{\HH}\,\dd s\bigg]\\
&\le c h^{2p \ \min(\beta,\delta,1)} \mathbb{E}\bigg[
\int_{0}^{T} \bigg\| \mathcal{P}_\kappa F\big(X^\kappa(\floor{s/h}h)\big)\bigg\|^{2p}_{\HH^{\min(\beta,\delta,1)}}\,\dd s\bigg]\\
&\le c h^{2p \ \min(\beta,\delta,1)} 
\big\|\big(I-\Delta_{\mathbb{S}^2}\big)^{\frac{\min(\beta,\delta,1)-1}{2}}\big\|^{2p}_{\mathcal{L}(H^0)} \mathbb{E}\bigg[
\int_{0}^{T} \bigg\| f\big(u^\kappa(\floor{s/h}h)\big)\bigg\|^{2p}_{L^{2}(\mathbb{S}^2)}\,\dd s\bigg]\\
&\le c h^{2p \ \min(\beta,\delta,1)}\left(1+\mathbb{E}\bigg[\int_{0}^{T}\big\|X^\kappa(\floor{s/h}h)\big\|^{2p}_{\HH}\,\dd s\bigg]\right)\\
&\le c h^{2p \ \min(\beta,\delta,1)}.
\end{aligned}
$$

Similar computations as above yield for~$J_3$ the bounds
$$
\begin{aligned}
J_3& \le c \, \mathbb{E}\left[\int_{0}^{T}\big\| F\big(X^\kappa(\floor{s/h}h)\big)-F(X^\kappa_{\floor{s/h}})\big\|^{2p}_{\HH}\,\dd s\right]
\le  c h  
\sum_{j=0}^{N-1}\mathbb{E}\left[\max_{0\le l \le j}\big\| X^\kappa(t_l)-X^\kappa_{l}\big\|^{2p}_{\HH}\right].
\end{aligned}
$$
The term $I_1$ is bounded with a Burkholder--Davis--Gundy type inequality in the same way as $I_2(t)$ in the proof of Theorem~\ref{spa_thm}, which yields together with Assumption~\ref{ass_compact} and Proposition~\ref{time_reg}
\begin{align*}
   I_1 
    & \le c  \int_0^T \E \left[ \|(G(X^\kappa(s))-G(X^\kappa(\floor{s/h}h)))Q^{1/2}\|_{\mathcal{L}_2(\HH)}^{2p}\right] \, \dd s \\
    & \le c \big\|\big(I-\Delta_{\mathbb{S}^2}\big)^{-\frac{\delta}{2}} \big\|^{2p}_{\mathcal{L}(H^0)}
\int_{0}^{T} \E[ \| (g(u^\kappa(s))-g(u^\kappa(\floor{s/h}h)))Q^{\frac{1}{2}}\|^{2p}_{\mathcal{L}_2(H^{\delta-1})}] \,\dd s\\
    & \le c \sum_{j=0}^{N-1}\int_{t_j}^{t_{j+1}} \E[\|u^\kappa(s)-u^\kappa(t_j)\|^{2p}_{L^2(\mathbb{S}^2)}] \,\dd s
    \le c h^{2p \ \min(\beta,\delta,1)}.
\end{align*}
Combining the Burkholder--Davis--Gundy type inequality with~\eqref{eqn:bound_E} on~$\HH^{\min(\delta,1)}$ yields with Lemmas~\ref{lem_op} and~\ref{lem_p} for $I_2$
\begin{align*}
    I_2
    & \le \E\left[ \max_{n=1,\dots,N} \|E(t_n - T)\|_{\mathcal{L}(\HH)}^{2p} \bigg\|\int_{0}^{t_{n}}E(T-\floor{s/h}h)(E(\floor{s/h}h-s)-I)\mathcal{P}_\kappa G\big(X^\kappa(\floor{s/h}h)\big)\,\dd W(s)\,\bigg\|^{2p}_{\HH}\right]\\
    & \le c h^{2p \min(\delta,1)} \int_0^T \big\|G(X^\kappa(\floor{s/h}h))Q^{\frac{1}{2}}\big\|_{\mathcal{L}_2(\HH^{\min(\delta,1)})}^{2p} \, \dd s,
\end{align*}
where the last integral is uniformly bounded by Assumption~\ref{ass_compact} and Proposition~\ref{wp_trunc}.

Bounding $I_3$ in the same way as $I_2$ in the proof of Theorem~\ref{spa_thm}, we obtain
\begin{align*}
    I_3
    & \le c \int_0^T \E\left[\|X^\kappa(\floor{s/h}h) - X^\kappa_{\floor{s/h}}\|_\HH^{2p}\right] \, \dd s
    \le c h  \sum_{j=0}^{N-1}\mathbb{E}\left[\max_{0\le l \le j}\big\| X^\kappa(t_l)-X^\kappa_{l}\big\|^{2p}_{\HH}\right].
\end{align*}
 
Finally, gathering all the above estimates, we end up with the estimates
$$
\mathbb{E}\bigg[\max_{n=1,\dots,N}\big\|X^\kappa(t_n)-X_{n}^\kappa \big\|^{2p}_{\HH}\bigg]
    \le 
        c h^{2p\min(\beta,\delta,1)}
        + c h \sum_{j=0}^{N-1}\mathbb{E}\bigg[\max_{0\le l \le j}\big\|
 X^\kappa(t_j)-X^\kappa_{j}\big\|^{2p}_{\HH}\bigg].
$$
Since all quantities are finite by Propositions~\ref{wp_trunc} and~\ref{prop_bound_fully_discr} and uniform in~$\kappa$, an application of a discrete Gr\"onwall inequality as in \cite[Lemma~A.4]{K14} yields the claim.

With the same arguments as in the proof of Theorem~\ref{spa_thm}, one can show $\mathbb{P}$-a.s.\ convergence of the fully discrete solution. This concludes the proof of the theorem.
\end{proof}

Having Theorem~\ref{spa_thm} and Theorem~\ref{tem_thm} available, the convergence of the fully discrete scheme to the solution~$X$ of the semilinear stochastic wave equation~\eqref{abstract_eq}  follows by the triangle inequality.
\begin{cor}
\label{ful_res}
Under the assumptions of Theorems~\ref{spa_thm} and~\ref{tem_thm} with $\kappa=N$ and $h=T/N$, the error of the fully discrete approximation~\eqref{exp_met_comp} of the semilinear stochastic wave equation~\eqref{abstract_eq} is bounded by
$$
\bigg(\mathbb{E}\bigg[\max_{n=1,\dots,N}\big\|X(t_n)-X^\kappa_{n}\big\|^{2p}_{\HH}\bigg]\bigg)
\le c \left(\kappa^{-\min(1,\beta,\delta)} + h^{\min(1,\beta,\delta)}\right)
\le c N^{-\min(1,\beta,\delta)}.
$$
In addition, if $X_0\in L^{2p}(\Omega; {\HH}^\beta(\mathbb{S}^2))$ for all $p\geq 1$, then asymptotically for any $0\leq t_n\leq T$
\begin{equation*}
\big\|X(t_n)-X^\kappa_n \big\|_{\HH} \le (\kappa^{-1} + h)^{r},  \qquad \mathbb{P}-\text{a.s.},
\end{equation*}
for any $r< \min(1,\beta,\delta)$.
\end{cor}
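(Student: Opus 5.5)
The corollary follows from splitting the error with the triangle inequality at the semi-discrete solution. For each $n$,
\[
\|X(t_n)-X^\kappa_n\|_\HH \le \|X(t_n)-X^\kappa(t_n)\|_\HH + \|X^\kappa(t_n)-X^\kappa_n\|_\HH .
\]
We take the maximum over $n=1,\dots,N$ and bound the maximum of the first term by $\sup_{t\in\IT}\|X(t)-X^\kappa(t)\|_\HH$. Minkowski's inequality in $L^{2p}(\Omega)$ then gives
\[
\Big(\E\Big[\max_{n}\|X(t_n)-X^\kappa_n\|_\HH^{2p}\Big]\Big)^{\frac1{2p}}
\le \Big(\E\Big[\sup_{t\in\IT}\|X(t)-X^\kappa(t)\|_\HH^{2p}\Big]\Big)^{\frac1{2p}}
+ \Big(\E\Big[\max_n\|X^\kappa(t_n)-X^\kappa_n\|_\HH^{2p}\Big]\Big)^{\frac1{2p}} .
\]

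The first term is bounded by Theorem~\ref{spa_thm} by $c(1+\|X_0\|_{L^{2p}(\Omega;\HH^\beta)})\kappa^{-\min(1,\beta,\delta)}$. The second term is bounded by Theorem~\ref{tem_thm} by $c h^{\min(1,\beta,\delta)}$, and this bound is uniform in $\kappa$. Uniformity matters because $\kappa$ and $h$ are coupled through $\kappa=N$, $h=T/N$. Substituting gives $\kappa^{-\gamma}+h^{\gamma}=(1+T^{\gamma})N^{-\gamma}$ with $\gamma=\min(1,\beta,\delta)$, which is the final bound.

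The bound proved in this way is for the $L^{2p}$ norm, i.e.\ the $2p$-th root of the expectation. The displayed statement omits the power $1/(2p)$. I read it as the $L^{2p}(\Omega;\HH)$ error, consistent with the two theorems. Alternatively, raising to the power $2p$ gives the stated form with $N^{-2p\gamma}$, which is $\le cN^{-\gamma}$ since $N\ge1$.

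For the almost sure statement, the two almost sure bounds from the theorems could be combined. It is more robust, however, to repeat the Borel--Cantelli argument directly on the combined error. Fix $r<\gamma$. Chebyshev's inequality and the strong bound give
\[
\mathbb P\big(\|X(t_n)-X^\kappa_n\|_\HH\ge(\kappa^{-1}+h)^r\big)\le c(\kappa^{-1}+h)^{2p(\gamma-r)}\le c' N^{-2p(\gamma-r)} .
\]
Choosing $p$ with $2p(\gamma-r)>1$, which is allowed because $X_0$ has all moments, makes the series summable in $N$. Borel--Cantelli then yields the claim.

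The only point requiring care is this last step. The coupling $\kappa=N$ must be kept so that Borel--Cantelli runs over the single index $N$. The constants must also be uniform in $\kappa$ (from Theorem~\ref{tem_thm}) and in $t_n$. I expect no genuine obstacle beyond this bookkeeping.
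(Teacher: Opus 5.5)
Your proposal is correct and takes the same route as the paper, which simply invokes the triangle inequality at the semi-discrete solution $X^\kappa(t_n)$ together with Theorems~\ref{spa_thm} and~\ref{tem_thm}; the uniformity in $\kappa$ of the latter is what makes the coupling $\kappa=N$ legitimate. Your remarks on the missing exponent $1/(2p)$, and your choice to rerun Borel--Cantelli directly over the coupled index $N$ rather than combine the two separate almost sure statements, are sound and make explicit details the paper leaves implicit.
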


\section{Numerical experiments}\label{sec-num}

We conclude the paper with several numerical experiments to illustrate and confirm the strong and almost-sure rates of convergence obtained in Theorems~\ref{spa_thm}~and~\ref{tem_thm}. We start with the case of a stochastic wave equation driven by additive noise and close with a problem with multiplicative noise. The code that was used in this section is available at~\cite{CDL26_code}.

\subsection{The additive case}
We consider the seminlinear stochastic wave equation on the sphere~\eqref{stoc_wave} for the time interval $[0,T]$, with $T=1$, and with additive noise and the following nonlinearity 
\begin{equation}
\label{op_test1}
f(u)=\sin(u^{0,0})Y_{0,0}+\sum_{\ell=1}^{\infty}\sin(u^{\ell,0})Y_{\ell,0}+\sum_{\substack{m=-\ell\\ m\ne 0}}^{\ell}\left(\sin({\rm Re}u^{\ell,m})+i \sin({\rm Im}u^{\ell,m})\right)Y_{\ell,m}.
\end{equation}
The initial values are taken to be 
\begin{equation}
\label{in_data1}
u_0=\sum_{\ell=1}^{\infty}\ell^{-\gamma}Y_{\ell,0}, \quad v_0=\sum_{\ell=1}^{\infty}\ell^{-(\gamma-1)}Y_{\ell,0},
\end{equation}
where $\gamma>\beta+\frac12$ is chosen such that $X_0=(u_0, v_0)^T\in \HH^{\beta}(\mathbb{S}^2)$. 
Let $\delta\geq0$, the eigenvalues $A_{\ell}, \ell=1,2,\dots$, of the covariance operator are taken to be 
\begin{equation}
\label{eigen_q}
A_\ell=\begin{cases}
    1,& \ell=0,\\
    \ell^{-\alpha}, & \ell=1,2,\dots,
\end{cases}
\end{equation}
for some $\alpha>2\delta$. This yields $Q^{\frac{1}{2}}\in\mathcal{L}_2(H^{\delta-1})$, see e.g., \cite{lang2025approximationlevydrivenstochasticheat} for details. 

In the above setting, Assumption~\ref{ass_compact} is verified for the nonlinearity~\eqref{op_test1} and covariance operator $Q$ with eigenvalues given in \eqref{eigen_q}. 

Let us first illustrate the behavior of solutions to the considered SPDE. In Figure~\ref{sample_add}, we display a sample of the solution to the semilinear  stochastic wave equation on the sphere with additive noise, with eigenvalues as in \eqref{eigen_q}, nonlinearity \eqref{op_test1} and initial data \eqref{in_data1}. Here, we take $\delta=\beta=1$ and $\alpha=2\delta+1e-06$. Moreover, the solution has been computed with the stochastic trigonometric method~\eqref{exp_met} with  time step size $h=2^{-14}$ and the spectral method~\eqref{sp_appr} with the truncation parameter $\kappa=2^7$.

\begin{figure}
\centering
\subfigure{\includegraphics[width=0.48\textwidth]{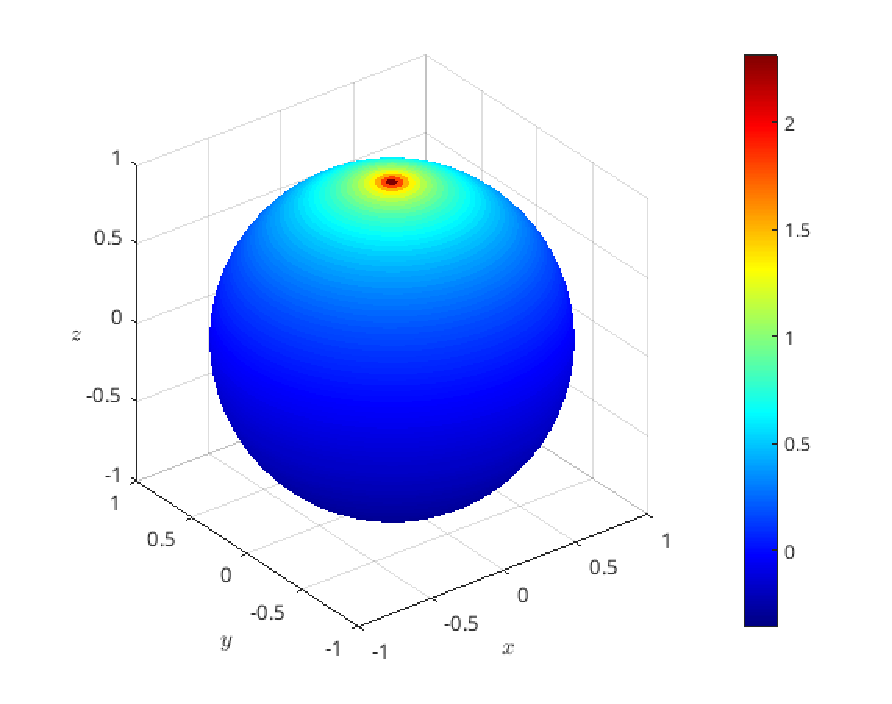}}\quad \subfigure{\includegraphics[width=0.48\textwidth]{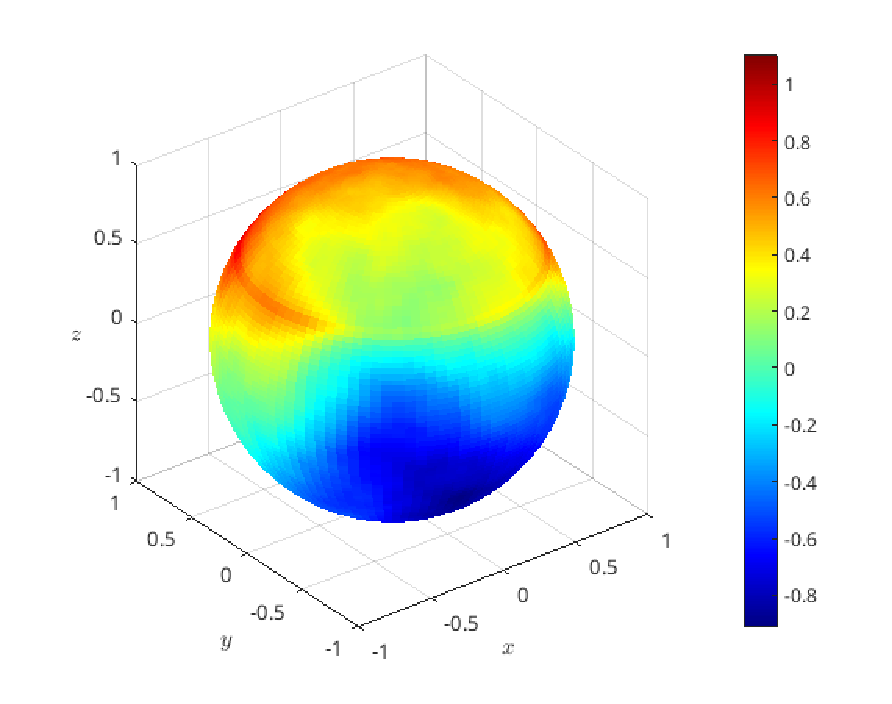}}
          \caption{Sample paths (at time $0$ (left) and at time $1$ (right)) of the solution to the stochastic wave equation on the sphere with additive noise \eqref{eigen_q}, nonlinearity \eqref{op_test1} and initial data \eqref{in_data1} with $\beta=1$. Here, we take $\alpha=2+1e-6$.}
           \label{sample_add}
\end{figure}

We continue by investigating the spatial rate of convergence of the spectral discretization~\eqref{sp_appr} and illustrate the rate of spatial convergence stated in Theorem~\ref{spa_thm}. We apply the time integrator \eqref{exp_met} with a fixed time step size $h=2^{-10}$ and vary the truncation index $\kappa=2^j$, for $j=1,\dots,7$. The errors for the position are measured in the $L^2(\Omega; L^2(\mathbb{S}^2))$-norm, the ones for the velocity in the $L^2(\Omega; H^{-1}(\mathbb{S}^2))$-norm. Furthermore, we also compute pathwise errors in this numerical experiments. The reference solution is computed with the same numerical method and with the discretization parameter $\kappa_{\text{ref}}=2^{9}$. We used $M=100$ independent samples to approximate the expectations, and we have verified that this is enough for the Monte Carlo error to be negligible. Observe that these errors and number of samples are also used in the numerical experiments below. The results for the parameters $\beta,\delta=1,1/2,1/4$, with $\alpha=2\delta+1e-6$, are presented in Figure~\ref{fig1}. Rates of convergence $1, 1/2$, or $1/4$, depending on the choice of these parameters, are observed. This confirms the results of Theorem~\ref{spa_thm} on the spatial rate of convergence of the proposed numerical scheme. 

\begin{figure}
\centering
\subfigure{\includegraphics[width=0.45\textwidth]{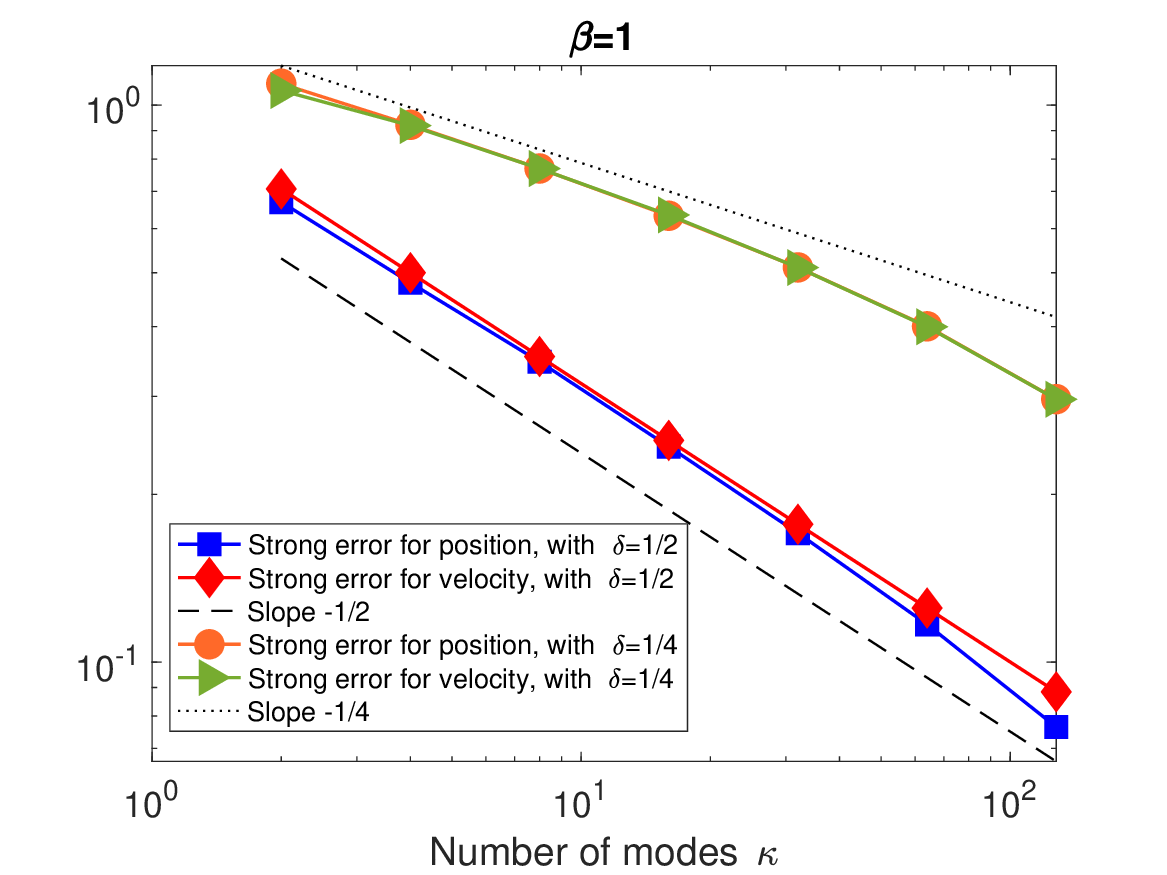}}\quad \subfigure{\includegraphics[width=0.45\textwidth]{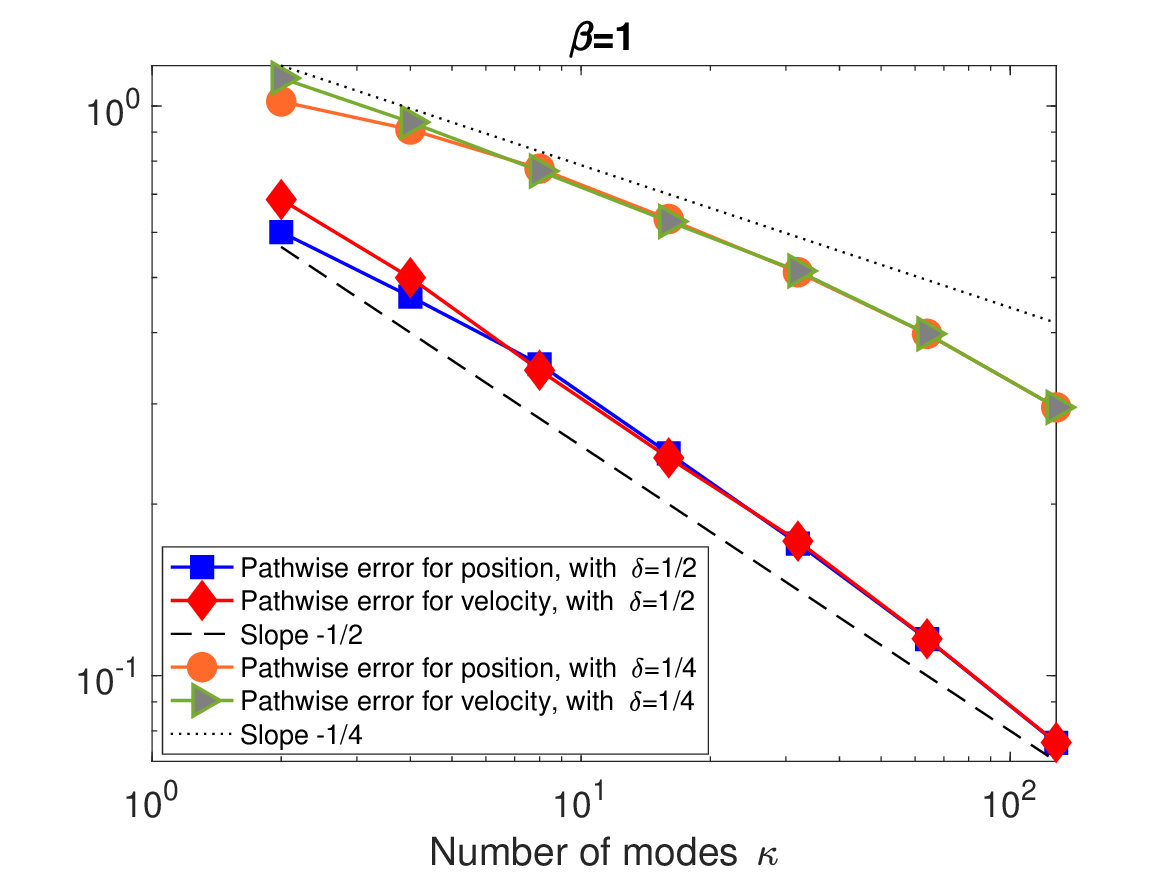}}\\
\subfigure{\includegraphics[width=0.45\textwidth]{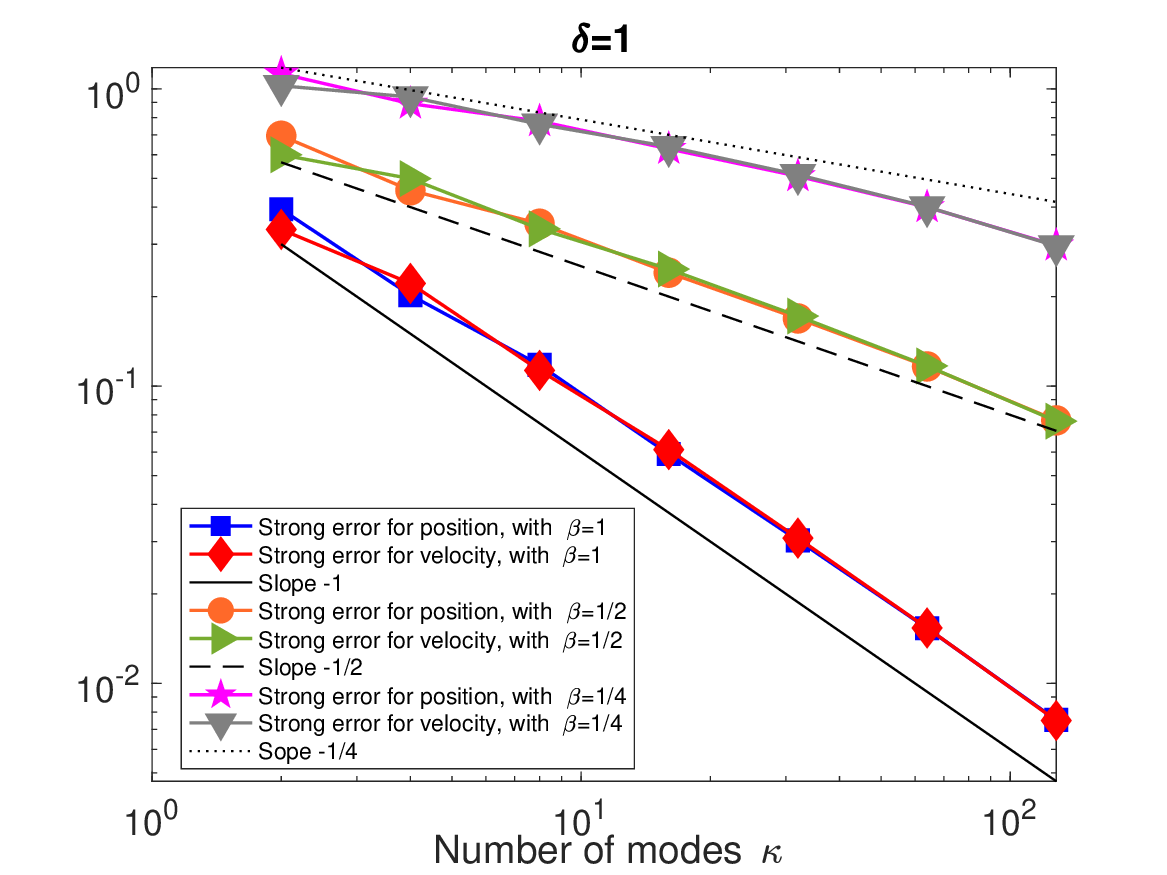}}\quad \subfigure{\includegraphics[width=0.45\textwidth]{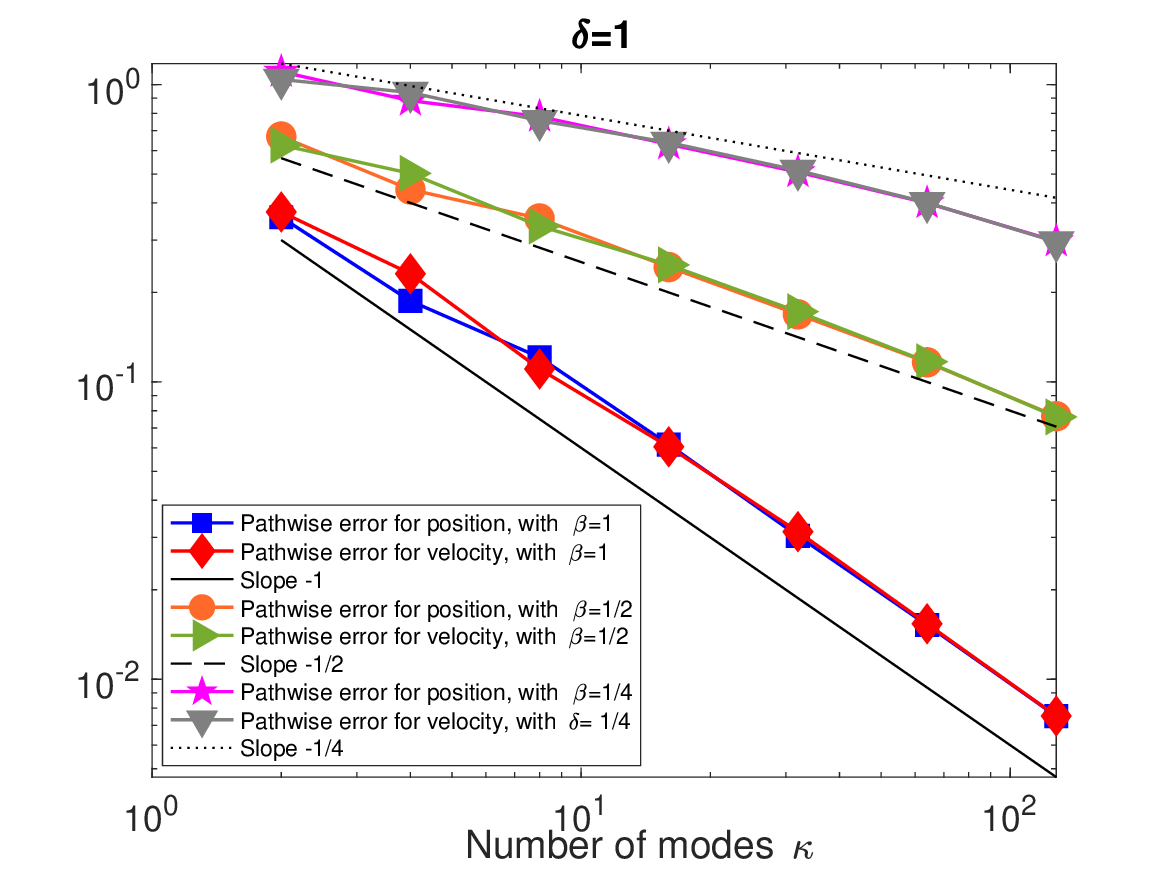}}
          \caption{Convergence in space: Strong errors (on the left) and pathwise errors (on the right) for different values of $\beta$ and $\delta$ and for the SPDE~\eqref{stoc_wave} with additive noise represented by~\eqref{eigen_q}, nonlinearity~\eqref{op_test1} and initial values~\eqref{in_data1}.}
           \label{fig1}
\end{figure}

We now investigate the temporal rate of convergence of the stochastic trigonometric integrator~\eqref{exp_met}, denoted by \textsc{STM} below. We consider the semilinear stochastic wave equation on the sphere~\eqref{stoc_wave} with additive noise, with the operator $Q$ as in \eqref{eigen_q} and the nonlinearity~\eqref{op_test1}. This SPDE is considered on the time interval $[0,T]$ with $T=1$,  with initial values given by \eqref{in_data1}. Furthermore, 
we always take $\alpha=2\delta+1e-6$. In addition, we compare the proposed time integrator with the semi-implicit Euler--Maruyama scheme, denoted by \textsc{SI} below,  
\begin{equation*}
    \label{si_met}
    X^\kappa_{n}=X^\kappa_{n-1}+h A X_n^\kappa+h\mathcal{P}_\kappa F(X_{n-1}^\kappa)+\mathcal{P}_\kappa G(X_{n-1}^\kappa)\Delta W_{n-1}.
\end{equation*}
The reference solution is computed using the \textsc{SI} scheme with time step size $h_{\text{ref}}=2^{-12}$. For both reference and numerical solutions, we consider a fixed truncation parameter $\kappa=2^9$. The results are presented in Figure~\ref{fig4}. The rate of convergence of the \textsc{STM} scheme is observed to be $1$ for $\delta=1$ and $1/2$ for $\delta=1/2$. This illustrates the theoretical rates in Theorem~\ref{tem_thm}. For the \textsc{SI} scheme, 
the observed rates are $1/2$ for $\delta=1$ and between $1/2$ and $1/4$ for $\delta=1/2$. 

\begin{figure}
\centering
\subfigure{\includegraphics[width=0.45\textwidth]{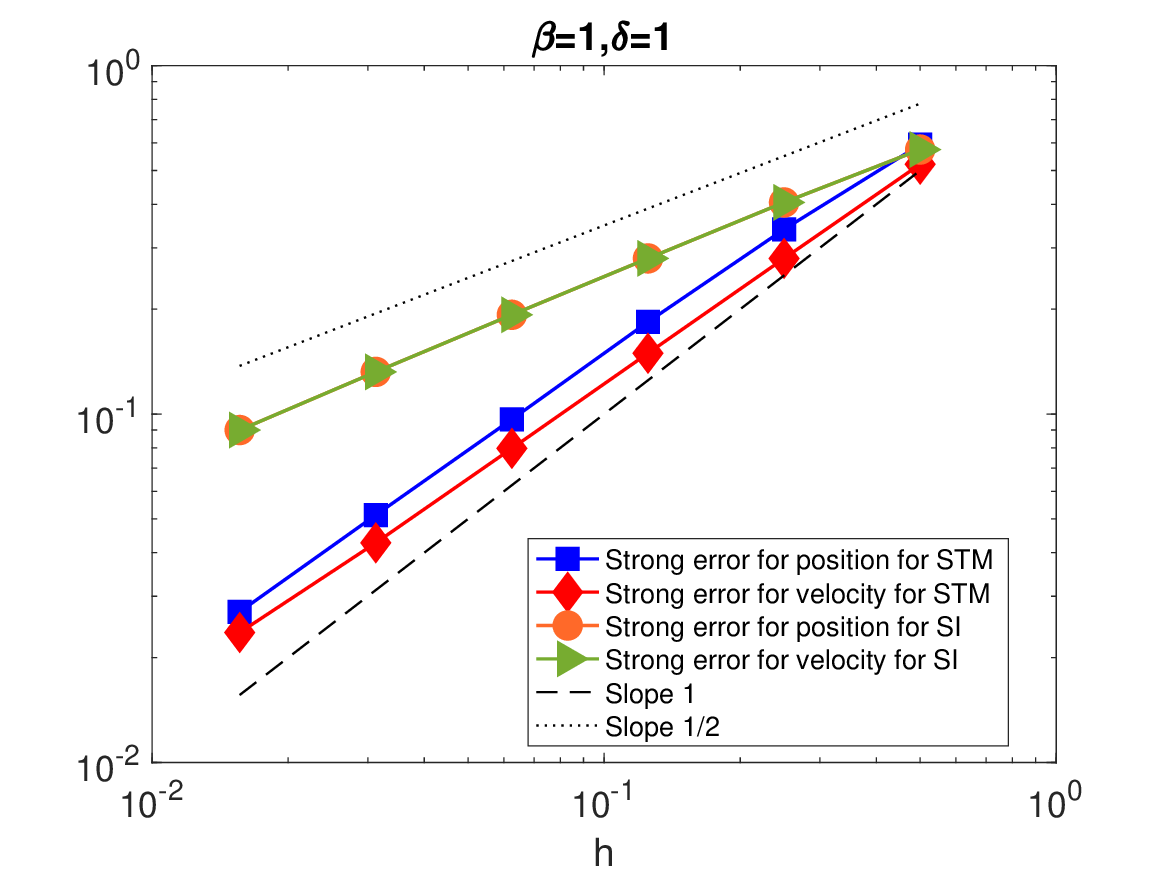}}\quad \subfigure{\includegraphics[width=0.45\textwidth]{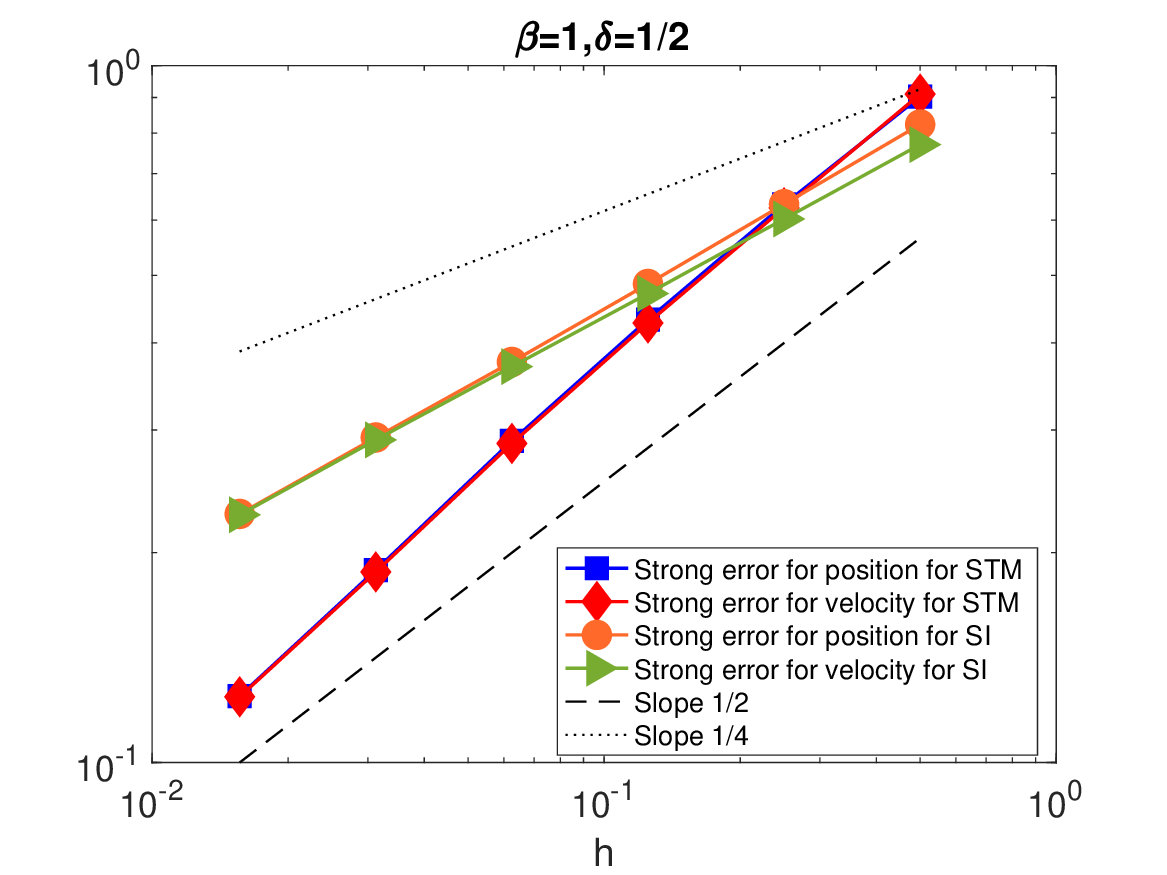}}
          \caption{Convergence in time: $L^2(\Omega; L^2(\mathbb{S}^2))$, resp. $L^2(\Omega; H^{-1}(\mathbb{S}^2))$ errors, for position, resp. velocity, for different values of $\beta$ and $\delta$ and for the SPDE~\eqref{stoc_wave} with additive noise represented by \eqref{eigen_q}, nonlinearity~\eqref{op_test1} and initial values~\eqref{in_data1}.}
           \label{fig4}
\end{figure}

We now proceed with numerically illustrating the pathwise $L^2(\mathbb{S}^2)$ and $H^{-1}(\mathbb{S}^2)$ errors in the position, resp. velocity of the stochastic trigonometric scheme~\eqref{exp_met}. To do this, 
we consider the same SPDE as above and take the discretization parameter $\kappa=2^{9}$.
The reference solution is computed with the stochastic trigonometric integrator~\eqref{exp_met} with a step size $h_{\rm ref}=2^{-11}$. The results are displayed in  Figure~\ref{fig5} for the parameters $\beta=1$ and $\delta=1/4$. A rate $1/4$ of pathwise convergence is observed for the \textsc{STM} method, confirming the result in Theorem~\ref{tem_thm}. For the \textsc{SI} method, a rate $1/8$ is observed.

\begin{figure}
\centering
\subfigure{\includegraphics[width=0.45\textwidth]{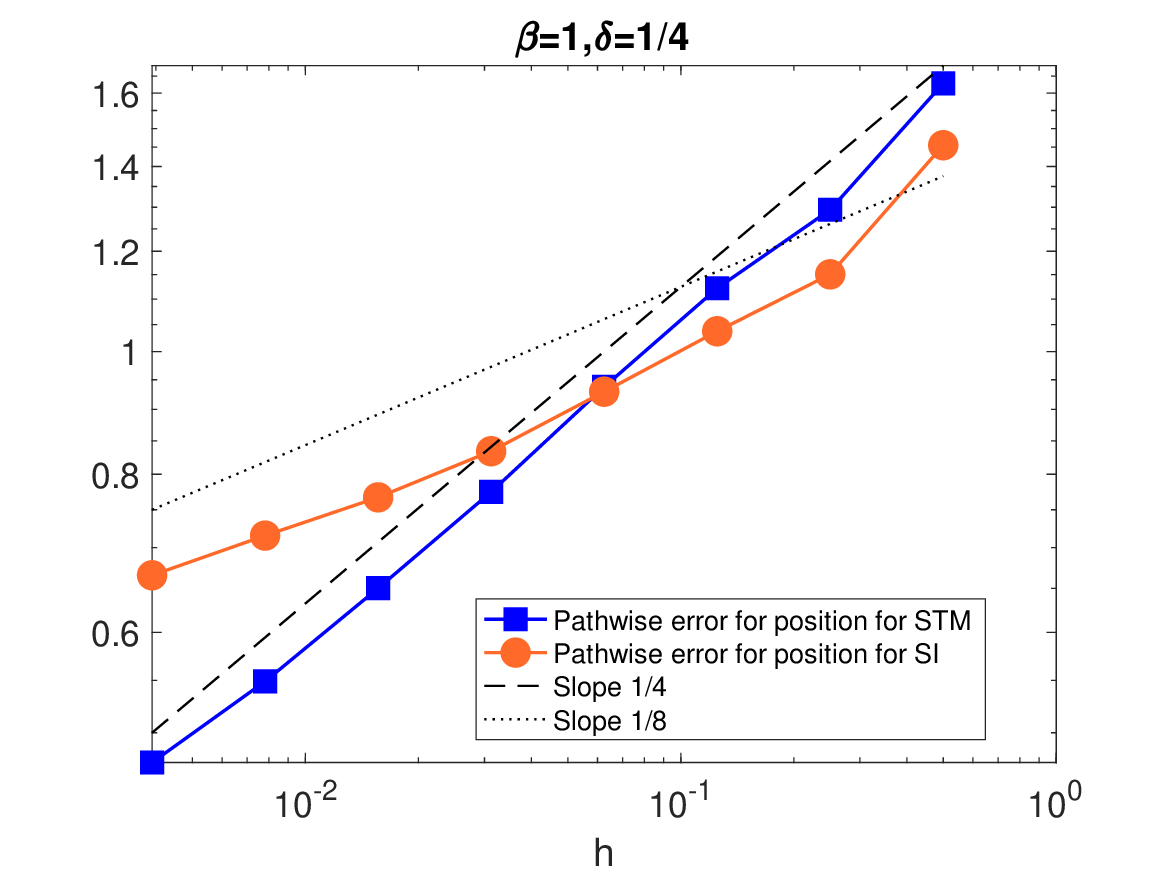}}\quad \subfigure{\includegraphics[width=0.45\textwidth]{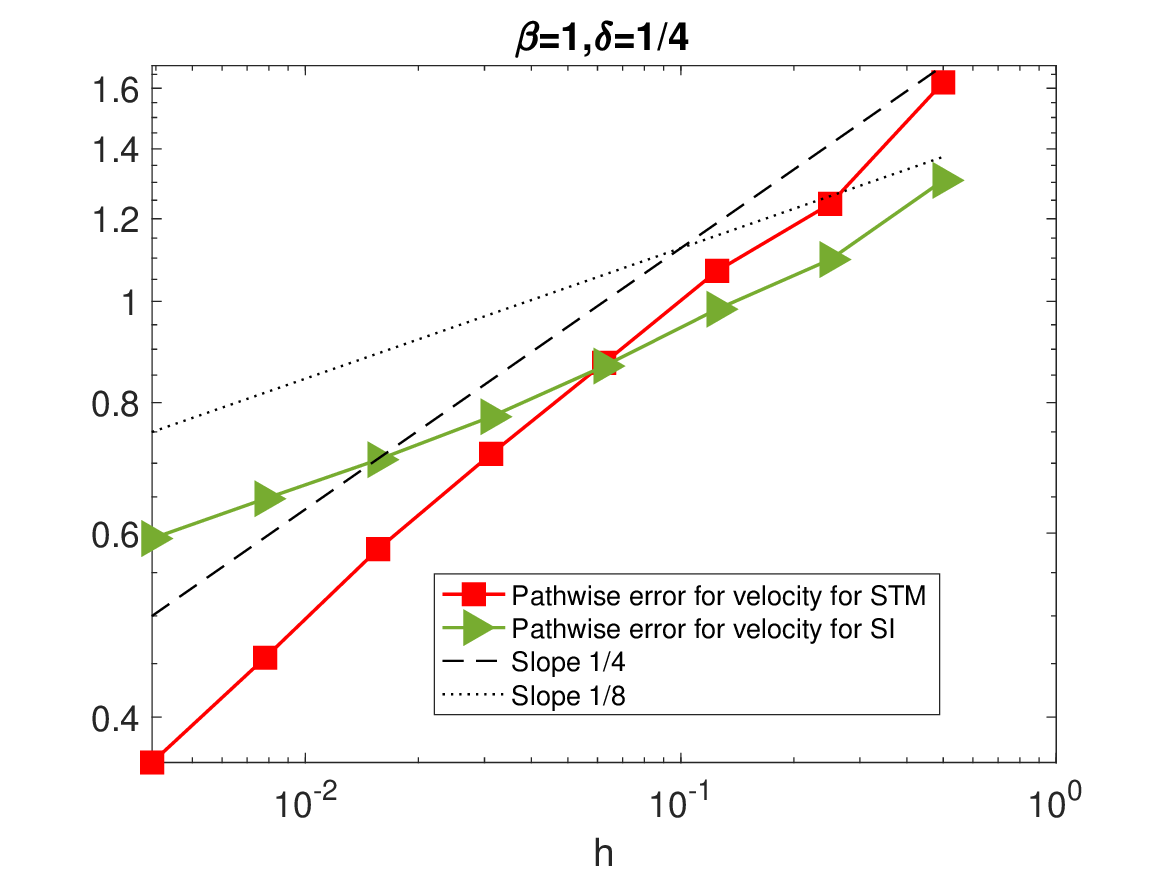}}
          \caption{Convergence in time: Pathwise $L^2(\mathbb{S}^2)$ and $ H^{-1}(\mathbb{S}^2)$ errors for position (on the left) and velocity (on the right) for 
          $\beta=1$, $\delta=1/4$ and for the SPDE~\eqref{stoc_wave} with additive noise represented by~\eqref{eigen_q}, nonlinearity~\eqref{op_test1} and initial values~\eqref{in_data1}.}
           \label{fig5}
\end{figure}

To further illustrate the performance of the stochastic trigonometric integrator \eqref{exp_met}, we include convergence results for nonlinearities in the SPDE~\eqref{stoc_wave} that have no spectral expansion in Figure~\ref{fig6}. These nonlinearities are 
\begin{equation}
f_1(u)=\sin(u),\qquad \qquad f_2(u) = \frac{1+u}{1+u^2}.
\label{testOp2}
\end{equation}
Furthermore, we compute the reference solution with the proposed numerical discretization with parameters $h_{\rm ref}=2^{-8}$ and, for both reference and numerical solutions, we used a fixed truncation parameter $\kappa=2^7$. The results are presented in Figure~\ref{fig6} for the parameters $\beta=\delta=1$ and $\alpha=2+1e-6$. A convergence rate equal to $1$ is observed for the stochastic trigonometric integrator~\eqref{exp_met}. This illustrates the theoretical rates in Theorem~\ref{tem_thm}.

\begin{figure}
\centering
\subfigure{\includegraphics[width=0.45\textwidth]{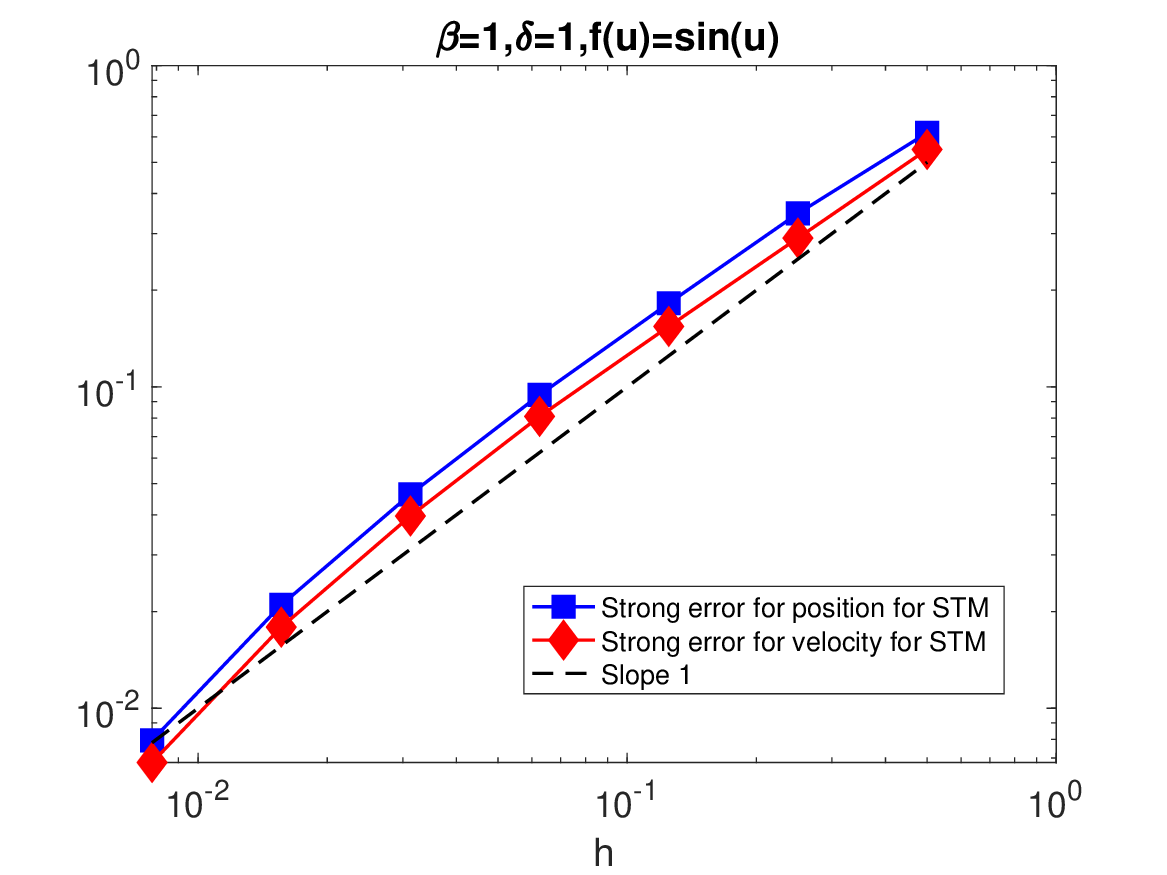}}\quad \subfigure{\includegraphics[width=0.45\textwidth]{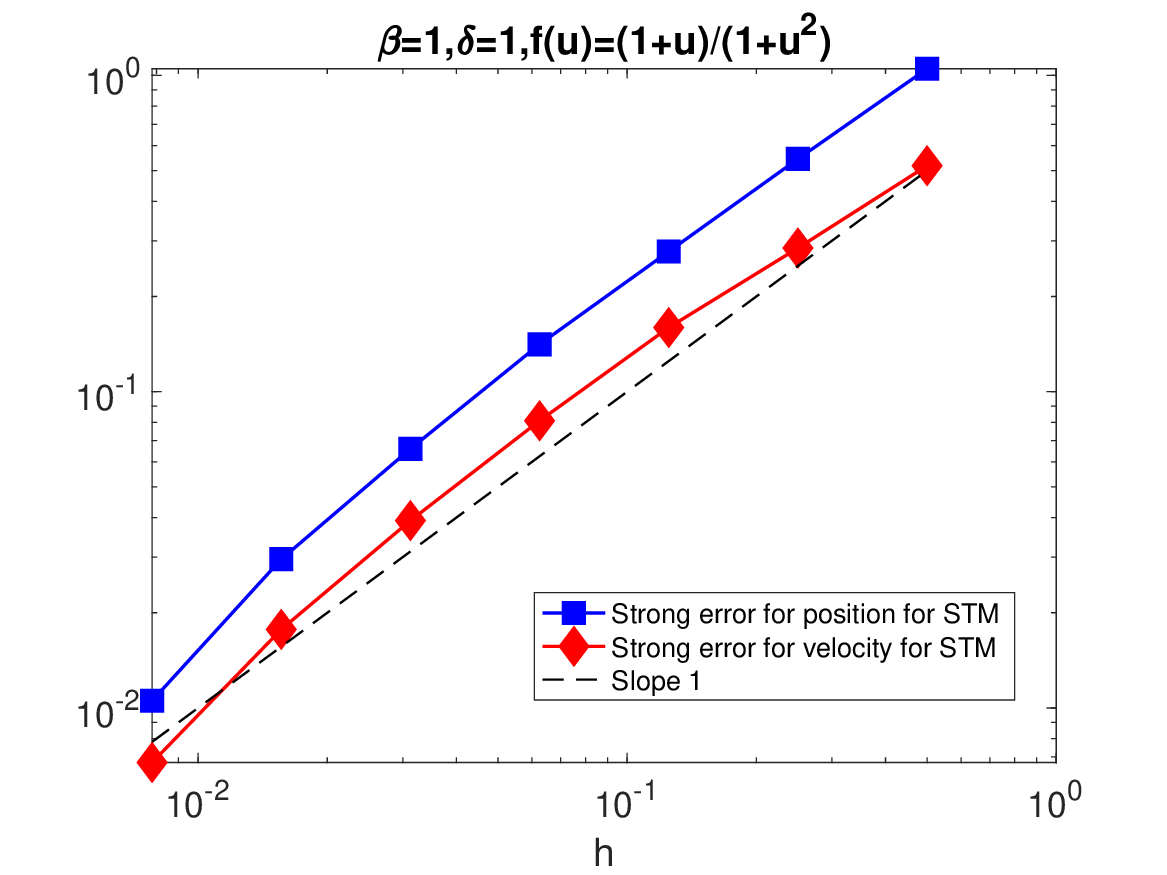}}
          \caption{Convergence in time: $L^2(\Omega; L^2(\mathbb{S}^2))$, resp. $L^2(\Omega; H^{-1}(\mathbb{S}^2))$ errors for position, resp. velocity 
          for $\beta=\delta=1$ and for the SPDE~\eqref{stoc_wave} with additive noise represented by~\eqref{eigen_q}, nonlinearities from~\eqref{testOp2} and initial valuers~\eqref{in_data1}.}
           \label{fig6}
\end{figure}

\subsection{The multiplicative case}
In this subsection, we consider the semilinear stochastic wave equation~\eqref{stoc_wave} with the nonlinearity~\eqref{op_test1} and multiplicative noise term 
$g(u)$ specified below. The problem is considered on the time interval $[0,T]$ with $T=1$ 
and initial values given by~\eqref{in_data1} with $\beta=1$. We take $Q^{1/2}\in \mathcal{L}_2(L^2(\mathbb{S}^2))$, that is the eigenvalues of the covariance operator $Q$ are given by~\eqref{eigen_q} with $\alpha=2+1e-6$.

Let us begin by illustrating the sample paths of solutions to the aforementioned SPDE with $g(u)=\sin(u)$ and $g(u)= (1+u)/(1+u^2)$. We use the stochastic trigonometric scheme~\eqref{exp_met} with time step size $h=2^{-10}$ and take the truncation index 
$\kappa=2^7$ in order to display one realization of solutions of these SPDEs in Figure~\ref{sample_mult}.

\begin{figure}
\centering
 \subfigure{\includegraphics[width=0.48\textwidth]{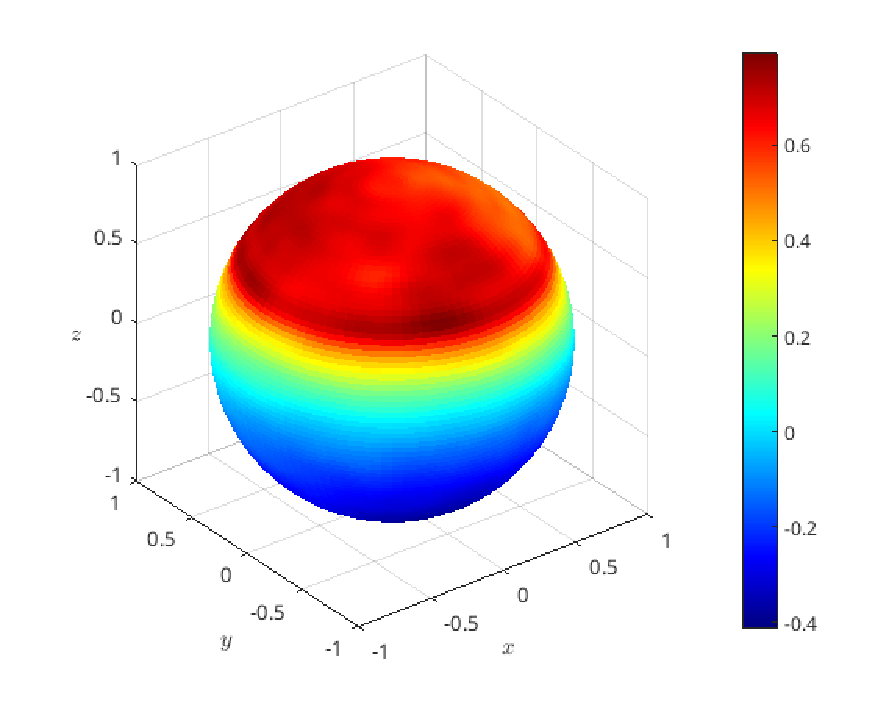}}\quad 
  \subfigure{\includegraphics[width=0.48\textwidth]{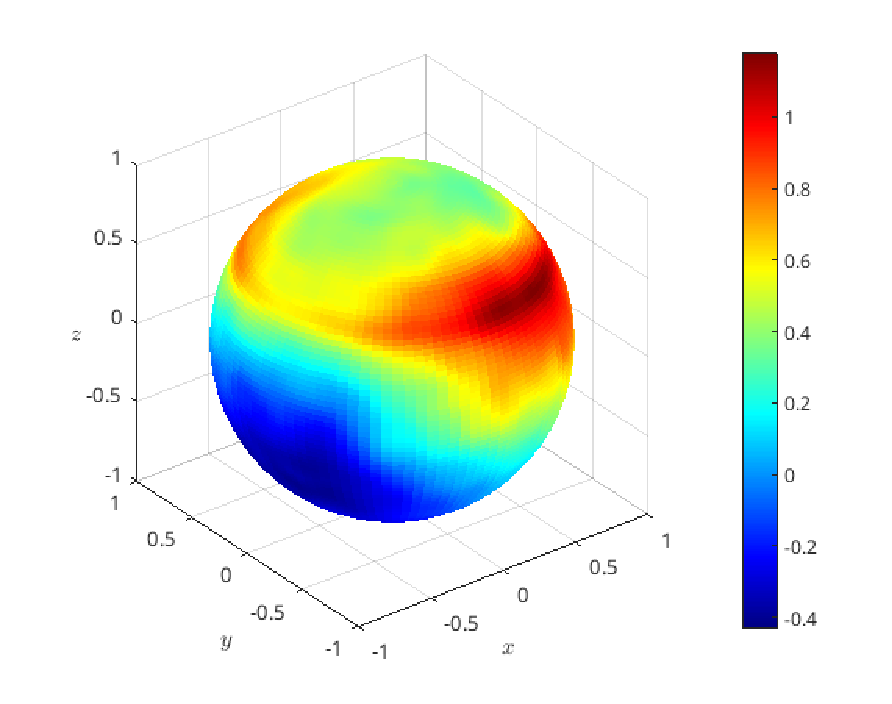}}
          \caption{Sample paths at time $1$ of the solution to the stochastic wave equation on the sphere with multiplicative noise with $g(u)=\sin(u)$ on the left and $g(u)=(1+u)/(1+u^2)$ on the right, nonlinearity \eqref{op_test1} and initial data \eqref{in_data1}, with $\beta=1$. Here, we take $\alpha=2+1e-6$. }
           \label{sample_mult}
\end{figure}
In order to investigate the temporal rate of convergence of the proposed time integrator, we take $g(u)=\sin(u)$ and thus $\delta=1$.
The reference solution is computed using the stochastic trigonometric  integrator with time step size $h_{\rm ref}=2^{-10}$ and both reference and numerical solutions are computed with a fixed truncation parameter $\kappa=2^7$. The results are presented in Figure~\ref{fig7}. In this figure, one can also observe the pathwise rates of convergence~$1$ 
in the position and velocity as proved in Theorem~\ref{tem_thm}. Finally, Figure~\ref{fig8} reports the pathwise and strong errors for the SPDE~\eqref{stoc_wave} with coefficients $f(u)=\sin(u)$ and $g(u)=(1+u)/(1+u^2)$. In this last numerical experiments, the reference solution has been computed with $h_{\rm ref}=2^{-11}$ and both reference and numerical solutions have been computed with the truncation parameter $\kappa=2^7$.

\begin{figure}
\centering
\subfigure{\includegraphics[width=0.45\textwidth]{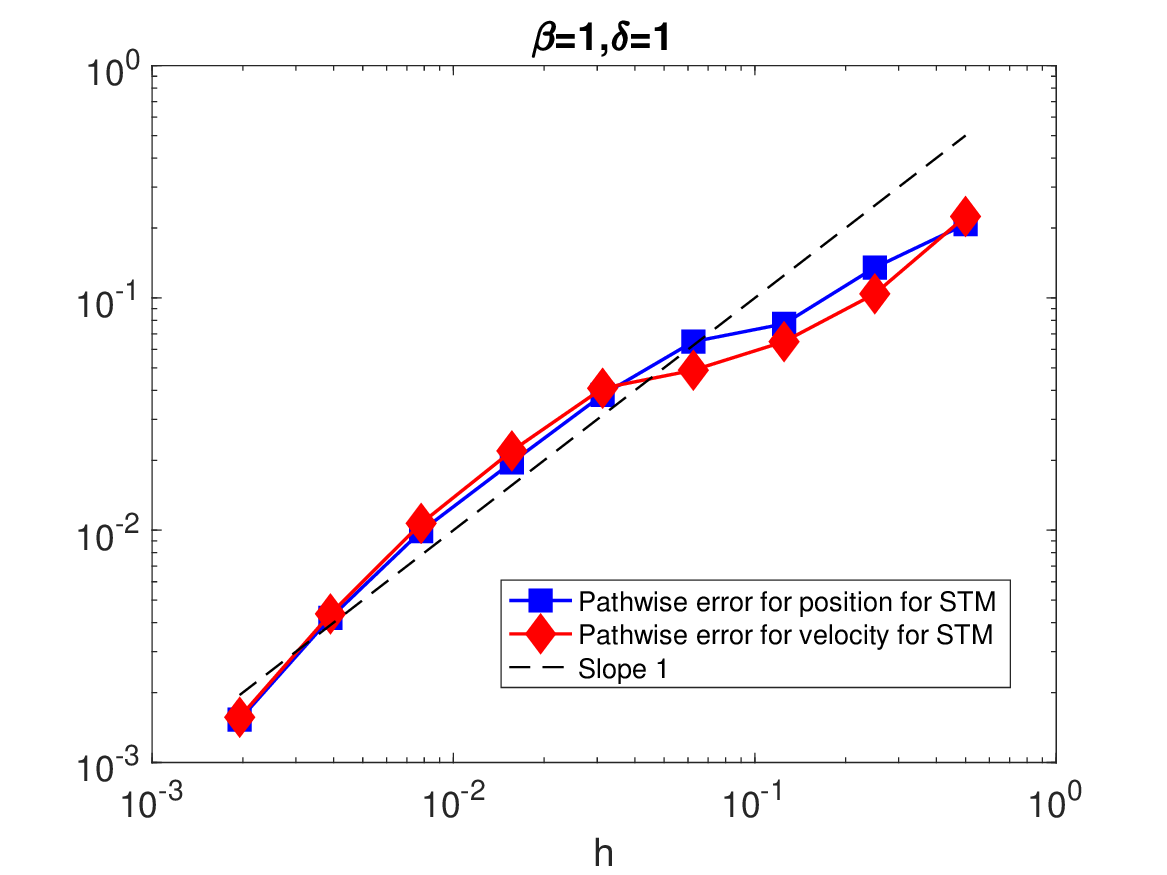}}
\quad \subfigure{\includegraphics[width=0.45\textwidth]{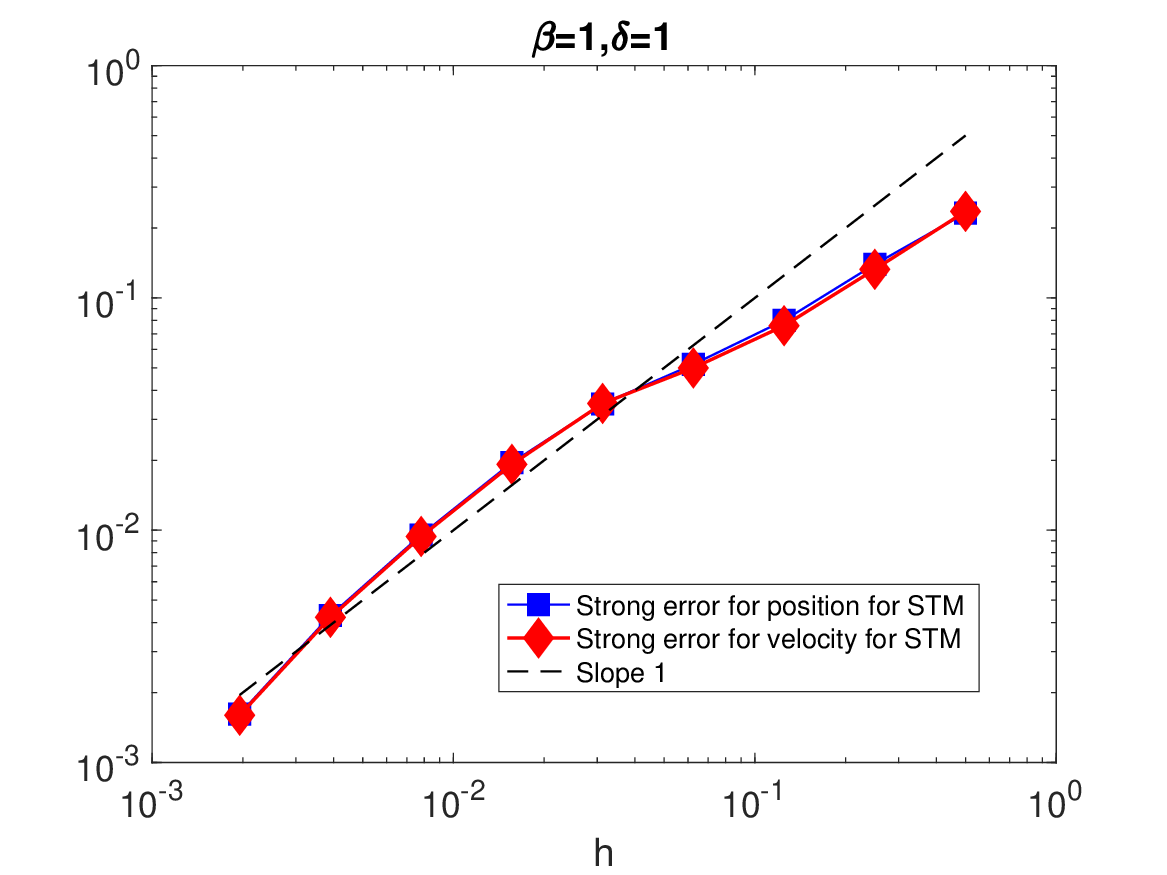}}
          \caption{Stochastic wave equation~\eqref{stoc_wave} with $f$ given in~\eqref{op_test1}, $g(u)=\sin(u)$, and initial data given in~\eqref{in_data1}: Temporal pathwise errors in $L^2(\mathbb{S}^2)$ and $H^{-1}(\mathbb{S}^2)$ for the position and velocity on the left and the corresponding strong errors on the right.}
           \label{fig7}
\end{figure}
\begin{figure}
\centering
\subfigure{\includegraphics[width=0.45\textwidth]{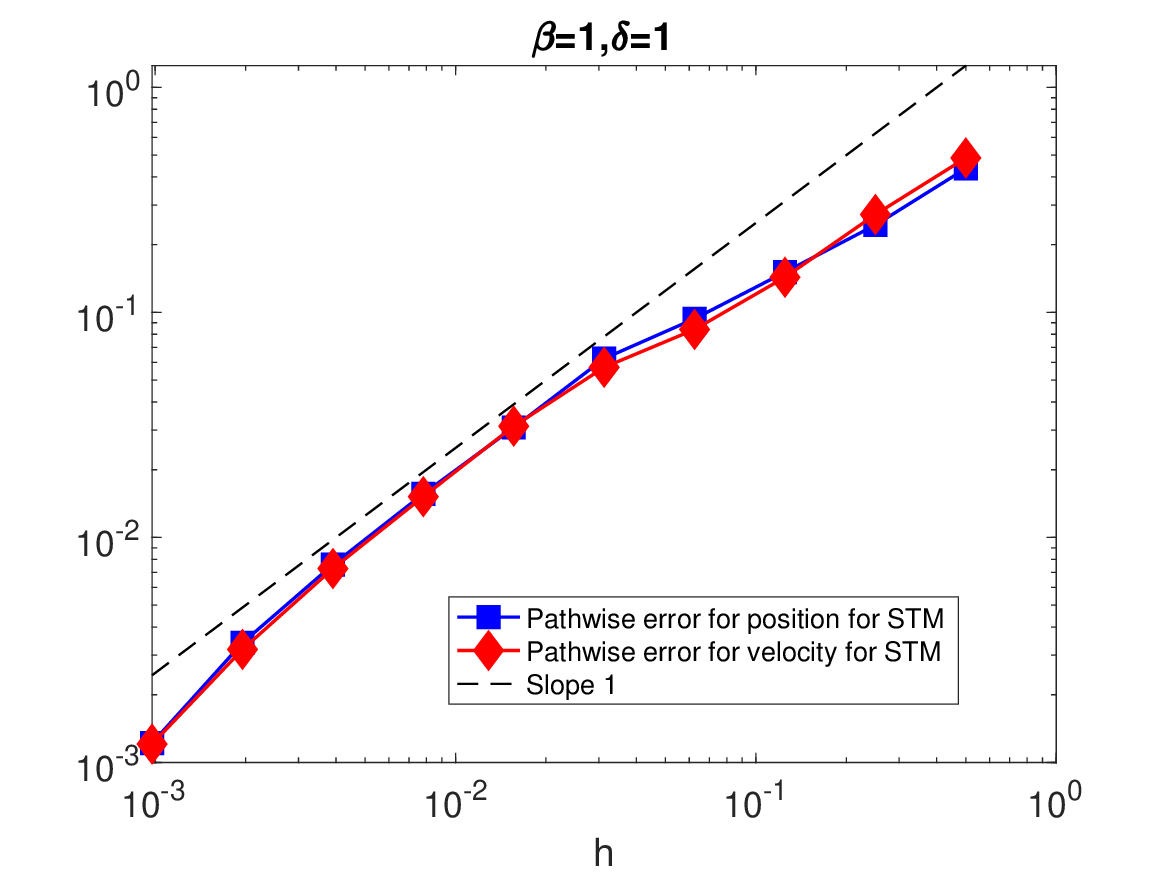}}
\quad \subfigure{\includegraphics[width=0.45\textwidth]{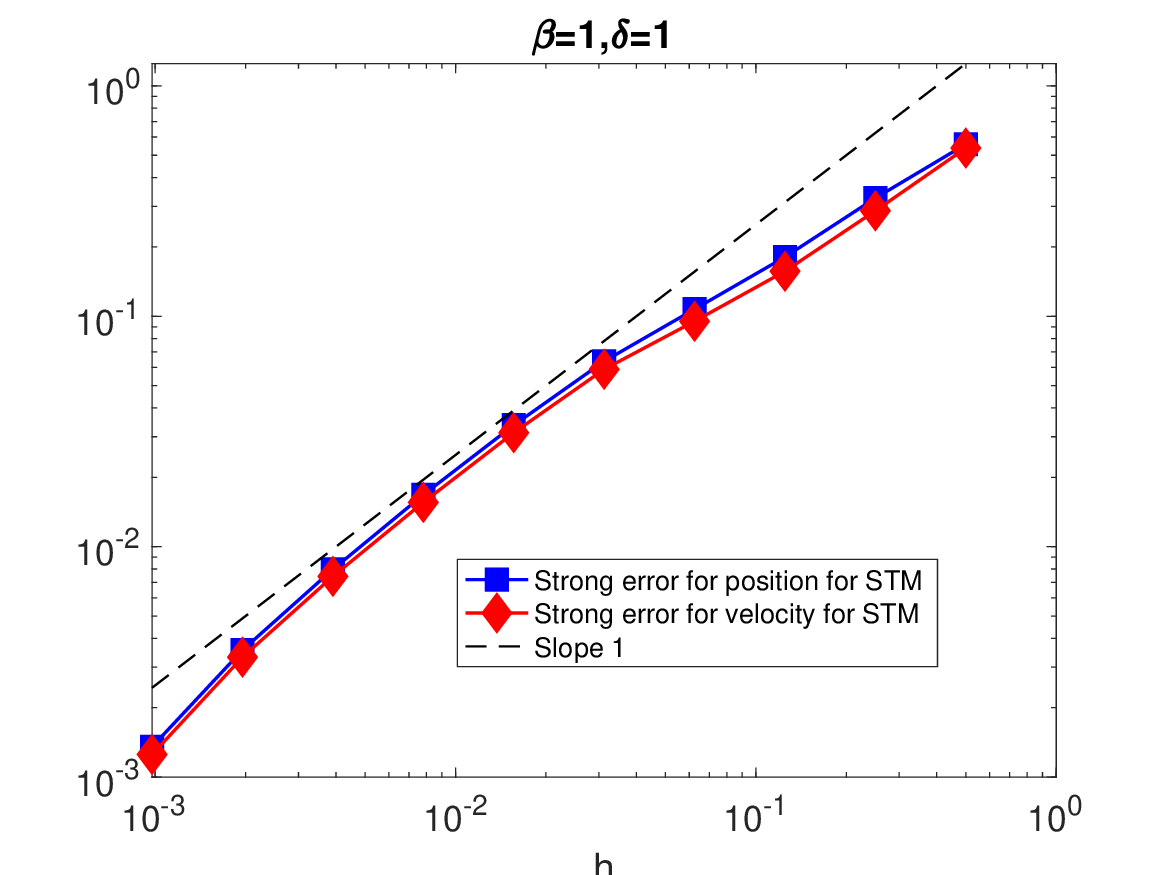}}
          \caption{Stochastic wave equation~\eqref{stoc_wave} with $f=\sin(u)$ and $g(u)=(1+u)/(1+u^2)$, and initial data given in~\eqref{in_data1}: Temporal pathwise errors in $L^2(\mathbb{S}^2)$ and $H^{-1}(\mathbb{S}^2)$ for the position and velocity on the left and the correspondent strong errors on the right.}
           \label{fig8}
\end{figure}

\bibliographystyle{abbrv}
\bibliography{labib}

\section{Acknowledgements}

SDG is a member of the INdAM Research group GNCS. SDG thanks INdAM for granting his visiting research period at Chalmers University of Technology in Gothenburg in April 2024 which allowed to start this project. His work was supported by PRIN-MUR 2022 project 20229P2HEA ``Stochastic numerical modelling for sustainable innovation'' (CUP: E53C24002280006), granted by MUR
within the scrolling of the final rankings of the PRIN 2022 call. The work of DC and AL was supported in part by the European Union (ERC, StochMan, 101088589) and by the Swedish Research Council (VR) through grants no.\ 2020-04170 and 2024-04536. The computations were performed on resources provided by the National Academic Infrastructure for Supercomputing in Sweden (NAISS) at Vera, Chalmers e-Commons at Chalmers University of Technology and partially funded by the Swedish Research Council through grant agreement no. 2022-06725.

Views and opinions expressed are however those of the author(s) only and do not necessarily reflect those of the European Union or the European Research Council Executive Agency. Neither the European Union nor the granting authority can be held responsible for them.

\end{document}